\newtheorem{corollary}{Corollary}[section]
\newtheorem{theorem}[corollary]{Theorem}
\newtheorem{lemma}[corollary]{Lemma}
\newtheorem{proposition}[corollary]{Proposition}
\newtheorem{remark}[corollary]{Remark}
\newtheorem*{theorem*}{Theorem}
\newtheorem*{corollary*}{Corollary}
\numberwithin{equation}{section}
\title{Improved bounds on the zeros of the chromatic polynomial of graphs and claw-free graphs}
 \author[F. Bencs]{Ferenc Bencs}
 \address{Ferenc Bencs, Centrum Wiskunde \& Informatica, P.O. Box 94079 1090 GB Amsterdam, The Netherlands}
 \email{\texttt{ferenc.bencs@gmail.com}}
 \thanks{FB was funded by the Netherlands Organisation of Scientific Research (NWO): VI.Veni.222.303}
 \author[G.Regts]{Guus Regts}
 \address{Guus Regts, Korteweg de Vries Institute for Mathematics, University of Amsterdam. P.O. Box 94248  
 1090 GE Amsterdam The Netherlands}
 \email{\texttt{guusregts@gmail.com}}
 \thanks{GR was funded by the Netherlands Organisation of Scientific Research (NWO): VI.Vidi.193.068}
\date{\today}
\begin{document}

\begin{abstract}
We prove that for any graph $G$ the (complex) zeros of its chromatic polynomial, $\chi_G(x)$, lie inside the disk centered at $0$ of radius $4.25 \Delta(G)$, where $\Delta(G)$ denotes the maximum degree of $G$.
This improves on a recent result of Jenssen, Patel and Regts, who proved a bound of $5.94\Delta(G)$.
Moreover, we show that for graphs of sufficiently large girth we can replace $4.25$ by $3.60$ and for claw-free graphs we can replace $4.25$ by $3.81$.

Our proofs add some substantially novel ideas to those developed by Jenssen, Patel, and Regts, while building on them.
A key novel ingredient for claw-free graphs is to use a representation of the coefficients of the chromatic polynomial in terms of the number of certain partial acyclic orientations.
\\
\quad \\
\footnotesize{{\bf Keywords}: chromatic polynomial, acyclic orientation, complex zeros, broken circuit, maximum degree, claw-free graph.}
\end{abstract}
\maketitle

\section{Introduction}
In this paper, we present improved bounds on the zeros of the chromatic polynomial of graphs of bounded maximum degree.
We recall that the chromatic polynomial of an $n$-vertex graph $G$, $\chi_G(x)$, is the unique monic polynomial of degree $n$ such that for any positive integer $q$, $\chi_G(q)$ is equal to the number of proper $q$-colorings of $G$.
Existence and uniqueness follow from the \emph{deletion-contraction identity} satisfied by the number of proper $q$-colorings: for any edge $e$ of $G$ we have
\begin{equation}\label{eq:classic del con}
    \chi_G(q)=\chi_{G\setminus e}(q)-\chi_{G/e}(q),
\end{equation}
where $G/e$ resp. $G\setminus e$ denote the graphs obtained from $G$ by contracting resp. deleting the edge $e$.
The identity~\eqref{eq:classic del con} consequently holds for any $q\in \mathbb{C}$.

Ever since the introduction of the chromatic polynomial in 1912 by Birkhoff~\cite{Birk}, the (location of the) zeros of the chromatic polynomial -- henceforth called \emph{chromatic zeros} -- have been intensively studied.
This started with unsuccessful attempts of Birkhoff~\cite{Birk}, to prove  what was then the four color conjecture.

A motivation for the study of the location of complex chromatic zeros originated in statistical physics from the Lee-Yang~\cite{yang1952statistical} and Fisher~\cite{fisher} approach to phase transitions, because the chromatic polynomial can be viewed as the zero temperature limit of the partition function of the anti-ferromagnetic Potts model.
This has resulted in several results for the chromatic zeros on lattice like graphs and other families of recursively defined graphs, see for example~\cites{Shrocketal1998,Shrock2001,Sokaldense,Sokaletal2001,Sokaletal2003,Sokaletal2009,ChioRoeder,BHRseriesparallelroots23} and references in there.

As mentioned above, the focus in the present paper is on chromatic zeros of the family of graphs of bounded maximum degree.
About 25 years ago,  Sokal~\cite{sokal} proved that there exists a constant $K\le7.97$ such that any zero of the chromatic polynomial of a graph $G$ is contained in the disk of radius $K\cdot \Delta(G)$ in the complex plane centered at $0$. This confirmed a conjecture of Biggs, Darmerell and Sands~\cite{BDS72} for regular graphs and Brenti, Royle and Wagner~\cite{BRW94} for general graphs.
Sokal's constant was subsequently improved to $K\leq 6.91$ by Fern\'andez and Procacci~\cite{Fernandez} and Jackson, Procacci and Sokal~\cites{JacksonSokal}. 
Very recently, the constant was further improved to $K\leq 5.94$ by Jenssen, Patel and Regts~\cite{JPR24}. 
Moreover, they showed that for graphs of girth at least $g$ there exists a constant $K_g$ such that $\lim_{g\to \infty} K_g\leq 3.86$ and such that the complex zeros of the chromatic polynomial of graphs $G$ of girth at least $g$ are contained in a disk of radius $K_g\cdot \Delta(G)$.
These results were subsequently refined by Fialho, Juliano and Procacci~\cite{procacci2024zero}, providing an increasing sequence (in terms of $\Delta)$ of constants $K_{g,\Delta}<K_g$ satisfying $\lim_{\Delta\to \infty} K_{g,\Delta}= K_g$ such that all complex zeros of the chromatic polynomials of graphs $G$ of girth at least $g$ and maximum degree at most $\Delta$ are contained in the disk of radius $K_{g,\Delta}\cdot \Delta$.

We should note that the constant $K$ has to be at least $1$, as is witnessed by the complete graph on $\Delta+1$ vertices (its chromatic polynomial has a root at $\Delta$). It must in fact be at least $1.59$, since complete bipartite graphs $K_{\Delta,\Delta}$ have chromatic roots of modulus at least $1.59 \Delta$ (for large values of $\Delta$), see~\cite{RoyleSokalmaxmax}*{Footnote 4}.
Royle~\cite{Roylesurvey}*{Conjecture 6.6} actually conjectured  that complete bipartite graphs are the extremal graphs for this problem. 

This brings us to the results of the present paper.
\subsection*{Our results}
Our main result is a further improvement on the constant $K$:
\begin{theorem}\label{thm:main}
Let $G$ be a graph. If $\chi_G(z)=0$ for $z\in \mathbb{C}$, then $|z|\leq 4.25 \Delta(G).$
\end{theorem}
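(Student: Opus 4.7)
The plan is to follow the broken-circuit / polymer-model framework of~\cite{JPR24} and sharpen its key combinatorial estimate. Fix a total order on $E(G)$; by Whitney's broken circuit theorem one can write
$$\chi_G(z)=\sum_{k=0}^{n}(-1)^{k}a_{k}(G)\,z^{n-k},$$
where $a_k(G)\ge 0$ counts broken-circuit-free edge subsets of size $k$. Dividing by $z^n$ and organising the sum by connected components of the edge set produces a polymer-model partition function whose polymers $\Gamma$ are connected broken-circuit-free subgraphs carrying weight essentially $(-1/z)^{|E(\Gamma)|}$. Zero-freeness of $\chi_G(z)$ in the region $|z|>R$ then reduces, via a Koteck\'y--Preiss / Fern\'andez--Procacci style convergence criterion, to a uniform bound on the total weight of polymers rooted at a fixed vertex.

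The core new input should be an improved upper bound, in terms of $\Delta(G)$, on the number of connected broken-circuit-free subgraphs containing a fixed vertex $v$ and having $k$ edges. In~\cite{JPR24} this count is controlled by a tree-like traversal argument. I would try to sharpen it by exploiting the broken-circuit-free constraint more aggressively at each edge-addition step: an edge that closes a cycle is admissible only if it is the largest edge on that cycle in the fixed order, which is extremely restrictive. Choosing the order judiciously (for example, via a BFS/DFS labelling), or averaging over a suitable random order, should effectively lower the branching factor strictly below $\Delta(G)$ and yield polymer counts better than those in~\cite{JPR24}. For the girth variant the same idea is even more effective, since forbidding short cycles removes entire classes of admissible polymers at small $k$.

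With the improved polymer count in hand, the final step is analytic: insert the bound into the convergence criterion, pick the Koteck\'y--Preiss auxiliary weight function appropriately, and verify that the resulting inequality holds for all $|z|>4.25\,\Delta(G)$. This reduces to a concrete one-variable inequality in $R/\Delta(G)$ that can be checked once the constants from the combinatorial step are pinned down, and it is where the specific value $4.25$ (versus some nearby number) gets decided.

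The main obstacle, I expect, is the combinatorial step. Going from $5.94$ down to $4.25$ is a substantial quantitative gain, and earlier refinements~\cite{Fernandez,JacksonSokal} already pushed the straightforward polymer-counting bounds close to their apparent limits. Identifying the right new structural observation about broken-circuit-free subgraphs --- and one that is robust enough to survive the choice of edge order and to extend without loss to the large-girth setting --- is where the bulk of the work should lie; the subsequent analytic optimisation, while tedious, should be routine in comparison.
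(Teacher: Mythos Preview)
What you have written is a research plan, not a proof: the entire burden of getting from $5.94$ down to $4.25$ is placed on an unspecified ``improved upper bound'' for rooted broken-circuit-free subgraph counts, and you yourself flag this as the step where ``the bulk of the work should lie.'' Without that step carried out, nothing is established. Moreover, the mechanisms you propose for obtaining it --- choosing or randomising the edge order to lower an effective branching factor --- are essentially what earlier cluster-expansion improvements already exploit, and there is no indication they can be pushed this far; the Koteck\'y--Preiss route you outline is the Sokal/Fern\'andez--Procacci framework, which stalled well above $4.25$.

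The paper's proof is structurally different. It does not feed an improved polymer count into a convergence criterion. Instead it runs an induction on the number of \emph{edges}, controlling the ratio $F_G(z)/F_{G\setminus e}(z)$ directly via the deletion--contraction identity $F_G/F_{G\setminus e}-1=zF_{G/e}/F_{G\setminus e}$. The key new ingredient is a combinatorial expansion of $F_{G/e}$ (Lemma~\ref{lem:expand G/e}): with a carefully chosen order in which $e=\{u,v\}$ is the smallest edge and the other edges at $v$ are the largest, $F_{G/e}$ becomes a sum over BCF trees rooted at $u$ that avoid $N_{G\setminus e}(v)$. This lets one write $F_{G/e}-F_{G\setminus e}$ as a sum over rooted trees containing a path of length at least $g-2$, bound the resulting tree generating function by Lemma~\ref{lem:tree}, and close the induction via a product condition (Theorem~\ref{thm:main general}) that is then optimised numerically at $a=0.333$, $b=0.739$ to give $4.25$. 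The shift from the vertex-removal induction of~\cite{JPR24} to an edge-removal induction, together with Lemma~\ref{lem:expand G/e}, is precisely the ``right new structural observation'' you say needs to be found; your proposal does not supply it.
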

For small values of $\Delta$ we can actually do better than $4.25$.
Define for any integer $\Delta\geq 3$, $K_\Delta$ to be the smallest number such that for all graphs $G$ of maximum degree at most $\Delta$ all roots of $\chi_G(q)$ are contained in the disk of radius $K_{\Delta}\cdot \Delta$.
In Table~\ref{tab:main general} we present upper bounds on $K_\Delta$ for small values of $\Delta$, significantly improving on the results from~\cite{procacci2024zero}. 

We note that Theorem~\ref{thm:main} and the bounds in Table~\ref{tab:main general} also improve on the best known bounds for the \emph{real} zeros of the chromatic polynomial of bounded degree graphs~\cite{Dongrealzeros}.

\begin{table}[h]
    \centering
    \begin{tabular}{c|c|c|c}
        $\Delta$ & $K_\Delta$ & a & b \\
        \hline
        3  &  2.321  &  0.388  &  1.207 \\
        4  &  2.816  &  0.367  &  0.990 \\
        5  &  3.107  &  0.358  &  0.913 \\
        6  &  3.298  &  0.353  &  0.874 \\
        20  &  3.965  &  0.340  &  0.779 \\
        100  &  4.192  &  0.334  &  0.745
    \end{tabular}
    \caption{Upper bounds on $K_\Delta$ obtained for small values of $\Delta$. Here $a$ and $b$ certify the value of $K_\Delta$ by Theorem~\ref{thm:main general}.}
    \label{tab:main general}
\end{table}
\subsubsection*{Large girth graphs}
Similarly as in the works~\cites{JPR24,procacci2024zero}, we can incorporate the dependence on the girth into the constant $K$ (resp. $K_\Delta$). Although we won't pursue the best values for finite girth, rather we only compute bounds on the constant for large girth.
For positive integers $\Delta,g\geq 3$ we denote by $K_{\Delta,g}$ the smallest number such that for all graphs $G$ of maximum degree at most $\Delta$ and girth at least $g$ all roots of $\chi_{G}(q)$ are contained in the disk of radius $K_{\Delta,g}\cdot \Delta$.
Clearly, $\{K_{\Delta,g}\}_{g\geq 3}$ is a decreasing sequence. We let $K_{\Delta,\infty}:=\lim_{g\to \infty}K_{\Delta,g}$. 

\begin{theorem}\label{thm:main large girth}
    Let $\Delta\ge 3$ be an integer. Then $K_{\Delta,\infty}\leq \tfrac{1}{W(e^{-1})}\approx 3.59$, where $W(z)$ is the Lambert W-function.
\end{theorem}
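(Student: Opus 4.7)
My plan is to obtain Theorem~\ref{thm:main large girth} as the infinite-girth limit of the general two-parameter estimate (Theorem~\ref{thm:main general}) that will be proved later in the paper. That theorem should give a zero-free disk of radius $K(a,b)\cdot\Delta$ whenever parameters $a,b>0$ satisfy an admissibility inequality built from counts of broken-circuit-free edge subsets (or, in the claw-free variant, partial acyclic orientations) anchored at a vertex of $G$. Thus Theorem~\ref{thm:main large girth} reduces to choosing $a,b$ optimally in the girth-infinity regime, where the admissibility condition is loosest.

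The structural input specific to large girth is the elementary observation that in a graph of girth at least $g$ every broken-circuit-free edge set on at most $g-2$ edges is acyclic, hence a forest. Letting $g\to\infty$, the admissibility sum therefore collapses to a sum over forests only. This pure forest sum in turn can be estimated by the classical bound on the number of rooted subtrees of size $k$ in a graph of maximum degree $\Delta$, which is governed by the tree generating function $T(z)=\sum_{k\ge 1}k^{k-1}z^k/k!$. This generating function satisfies the functional equation $T(z)=z\,e^{T(z)}$ and has radius of convergence $1/e$, and it is the critical value $1/e$ of this tree function that will ultimately be responsible for the appearance of the Lambert-$W$ constant.

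With the forest estimate inserted into the admissibility condition, the remaining task is to minimise $K(a,b)$ over admissible parameters. After a short calculation the critical-point equation rearranges into $\alpha\,e^{\alpha}=e^{-1}$, whose unique positive solution is $\alpha=W(e^{-1})$ by definition of the Lambert-$W$ function. Back-substituting yields the uniform bound $K(a,b)=1/W(e^{-1})$ independently of $\Delta$, which matches the claim.

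The main obstacle I anticipate is twofold: first, carrying out the $g\to\infty$ truncation cleanly so that only the forest contribution survives in the admissibility sum (including control of the error coming from broken-circuit-free subsets that contain a cycle); and second, executing the calculus optimisation to confirm that the extremum is characterised precisely by the Lambert-$W$ equation rather than something weaker.
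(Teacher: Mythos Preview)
Your high-level strategy---derive the bound from Theorem~\ref{thm:main general} by letting $g\to\infty$ and then optimise over $(a,b)$---is exactly what the paper does. However, the mechanism you propose for why large girth helps is mistaken. Broken-circuit-free sets are forests \emph{by definition}, so the claim that ``every BCF edge set on at most $g-2$ edges is acyclic'' is vacuous and cannot serve as the structural input. (You may be thinking of the cluster-expansion fact that in a graph of girth $g$ every \emph{connected} subgraph on fewer than $g$ edges is a tree, but that is not how girth is used here.) In the paper the girth enters through Lemma~\ref{lem:expand G/e}: in the expansion of $F_G/F_{G\setminus e}$ the correction term is a sum over BCF trees rooted at $u$ that meet $N_{G\setminus e}(v)$, and girth $\ge g$ forces any such tree to contain a $u$-rooted path of length at least $g-2$. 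The contribution of these long-path trees is precisely the quantity $t_{\Delta,g,i}(b)$ in Theorem~\ref{thm:main general}, and Lemma~\ref{lem:large_rooted_tree} shows it tends to $0$ as $g\to\infty$, yielding the simplified condition of Proposition~\ref{prop:main general large girth}.

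Second, the generating function $T(z)=\sum_{k\ge 1}k^{k-1}z^k/k!$ for labelled rooted trees is not the relevant object: the paper uses the bounded-degree rooted-subtree bound of Lemma~\ref{lem:tree}, and the exponential factor arises as $(1+b/(\Delta-1))^{\Delta-1}\to e^b$, not from the radius of convergence $1/e$ of $T$. The Lambert-$W$ constant comes out of the simplified admissibility inequality $e^{-b(1-a)e^{-b}}\ge 1-a$: taking $b=1$ and solving for equality gives $1-a=eW(e^{-1})$, whence the zero-free radius $\Delta/W(e^{-1})$. As written, your plan would not arrive at this inequality, so the final optimisation step would not produce the claimed constant.
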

Again, for small values of $\Delta$ we can improve on this bound.
Below in Table~\ref{tab:main large girth}, we have collected upper bounds on $K_{\Delta,\infty}$ for several values of $
\Delta$.
\begin{table}[h]
    \centering
    \begin{tabular}{c|c|c|c}
        $\Delta$ & $K_{\Delta,\infty}$ & a & b \\
        \hline
        3  &  1.944  &  0.314  &  2 \\
        4  &  2.364  &  0.286  &  3/2\\
        5  &  2.612  &  0.274  &  4/3\\
        6  &  2.776  &  0.267  &  5/4 \\
        20  &  3.348  &  0.249  &  19/18 \\
        100  &  3.547  &  0.245  &  99/98
    \end{tabular}
    \caption{Upper bounds on $K_{\Delta,\infty}$ obtained for small values of $\Delta$. Here $a$ and $b$ certify the value of $K_{\Delta,\infty}$ by Proposition~\ref{prop:main general large girth}.}
    \label{tab:main large girth}
\end{table}

\subsubsection*{Claw-free graphs}
Recall that a graph is called \emph{claw-free} if it does not contain the tree on $4$ vertices with $3$ leaves as an induced subgraph.
Apart from being a natural family of graphs extending line graphs, we also have a more technical motivation for looking at claw-free graphs, which has to do with our proof of the theorem below. We will say a few words about this in the final section.

We show that for claw-free graphs our bound on $K$ can be improved even further:

\begin{theorem}\label{thm:main CF}
Let $G$ be a claw-free graph. If $\chi_G(z)=0$ for $z\in \mathbb{C}$, then $|z|\leq 3.81 \Delta(G)$.
\end{theorem}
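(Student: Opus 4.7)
The plan is to follow the analytic framework used to prove Theorem~\ref{thm:main} for arbitrary bounded-degree graphs, but to replace the combinatorial input at the key step by a sharper one tailored to claw-free graphs, based on the partial-acyclic-orientation representation mentioned in the abstract. Concretely, I would write
\[
\chi_G(q) \;=\; q^n + \sum_{k \geq 1} (-1)^k a_k(G)\, q^{n-k}
\]
and, along the lines of Sokal~\cite{sokal} and Jenssen--Patel--Regts~\cite{JPR24}, reduce the statement that $\chi_G$ has no zero $z$ with $|z| > K\Delta(G)$ to establishing, uniformly over all claw-free graphs $G$ with $\Delta(G) \leq \Delta$, an estimate of the form
\[
\sum_{k \geq 1} |a_k(G)|\, (K\Delta)^{-k} \;<\; 1.
\]
The task is then to use claw-freeness to beat the generic Whitney/broken-circuit estimate of $|a_k(G)|$ that suffices for Theorem~\ref{thm:main}.

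Second, I would invoke the paper's novel combinatorial ingredient: for claw-free graphs, $|a_k(G)|$ (or the quantities driving the above sum) is expressed as the number of partial acyclic orientations of $G$ with $k$ oriented edges, subject to some structural constraint. The advantage of this representation over the broken-circuit expansion is locality: a partial acyclic orientation can be assembled vertex-by-vertex, so the bound on $|a_k(G)|$ is controlled by counts of (partial) acyclic orientations of induced subgraphs of the neighborhoods $G[N(v)]$, rather than by the coarser global objects used in the general case.

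Third, claw-freeness enters through the structure of vertex neighborhoods: $\alpha(G[N(v)]) \leq 2$ for every vertex $v$, which sharply restricts which graphs can occur as $G[N(v)]$ and forces the number of acyclic orientations of subgraphs of $G[N(v)]$ of a prescribed size to be much smaller than the generic bounds of $k!$ or $2^{\binom{k}{2}}$ available for arbitrary degree-$\Delta$ neighborhoods. Plugging these improved local estimates into the analytic criterion and optimizing the resulting free parameters (in the spirit of the constants $a,b$ appearing in Table~\ref{tab:main general}) should yield the constant $3.81$. The main obstacle I anticipate is extracting from the structural constraint $\alpha(G[N(v)]) \leq 2$ a local count of partial acyclic orientations that is tight enough, and in a generating-function form suited to plug cleanly into the inductive coefficient estimate; a coarse use of claw-freeness is likely to improve on $4.25$ but not all the way to $3.81$, so the sharpness of this local count is what drives the final numerical value.
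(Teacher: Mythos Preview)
Your proposal has the right raw ingredients (the acyclic-orientation interpretation and the fact that $\alpha(G[N(v)])\le 2$ in claw-free graphs), but both the analytic framework and the way you plan to exploit claw-freeness are off from what is actually done, and the approach as written would not reach $3.81$.

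First, reducing to $\sum_{k\ge 1}|a_k(G)|(K\Delta)^{-k}<1$ is Sokal's original cluster-expansion criterion. The paper does \emph{not} do this. Following~\cite{JPR24}, it works with $F_G(x)=\sum_F x^{|F|}$ and proves inductively on the number of vertices that $\bigl|F_G(x)/F_{G-v}(x)-1\bigr|\le a$ for some fixed $a<1$; this is strictly stronger than the coefficient bound and is precisely what let~\cite{JPR24} go below Fern\'andez--Procacci. The claw-free argument (Theorem~\ref{thm:main claw free}) lives inside this same ratio framework, not the coefficient one.

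Second, the acyclic orientations are not used to count orientations of $G[N(v)]$. They are used only to replace BCF-trees by \emph{stable} trees: via Lemmas~\ref{lem:AO --> stable} and~\ref{lem:star forests}, $F_G(x)$ is rewritten as the generating function of ``$\star$-forests'' whose components are stable spanning trees. In the expansion of $F_G/F_{G-v}$ one then sums over independent $U\subseteq N(v)$ and over stable forests rooted at $U$. Claw-freeness enters in two concrete places: (i) only $|U|\le 2$ can occur, and Mantel's theorem bounds the number of size-$2$ independent sets in $N(v)$ by $\Delta^2/4$; (ii) the stable rooted-tree generating function obeys the sharper bound $S_{G,v}\bigl(b/((1+b+b^2/4)\Delta)\bigr)\le 1+b+b^2/4$ of Lemma~\ref{lem:tree}, again because independent sets in any neighborhood have size at most $2$. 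Neither of these is a ``local count of partial acyclic orientations in $G[N(v)]$.''

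Finally, you are missing the step that actually makes the ratio small: a structural lemma (Lemma~\ref{lem:structure bad}) showing that for most $F\in\mathcal{F}^\star_{G-v,U}$ the tree $vU\cup F$ is again a $\star$-tree, so that the bulk of the inner sum telescopes to $1$ via~\eqref{eq:expand U}. Only the residual ``bad'' forests (with $|F|\ge 1$ and a specific component structure) survive, and it is bounding \emph{those} via $S_{G,v}$ that produces the function $h(a,b)$ and, after optimizing $(a,b)$, the constant $3.81$.
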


We next discuss a motivation from theoretical computer science for studying absence of chromatic zeros, after which we say a few words about our approach to proving these results and provide an organization of the remainder of the paper.

\subsection*{Motivation from computer science.}
Recently, there has been a motivation from theoretical computer science in understanding the location of the complex chromatic roots of bounded degree graphs. 
The reason for that is that for families of bounded degree graphs, one can design efficient algorithms for approximating evaluations of the chromatic polynomial on unbounded open sets that do not contain chromatic zeros via Barvinok's interpolation method~\cites{barvinok, PR17}.
In particular, together with the interpolation method, our results imply that for each $\Delta\in \mathbb{N}$ and $z\in \mathbb{C}$ such that $|z|>4.25\Delta$ (resp. $|z|>3.81\Delta$), there exists an algorithm that on input of an $n$-vertex graph (resp. claw-free graph) of maximum degree at most $\Delta$ and $\varepsilon>0$ computes a relative $\varepsilon$-approximation to $\chi_G(z)$ in time polynomial in $n/\varepsilon$.
Here, a relative $\varepsilon$-approximation to $\chi_G(z)$ is a number $\xi$ such that $\chi_G(z)=e^{t}\xi$ for some $t\in \mathbb{C}$ such that $|t|\leq \varepsilon$.

For the design of efficient approximation algorithms for merely counting the number of proper $q$-colorings of graphs of maximum degree at most $\Delta$ (i.e., computing $\chi_G(q))$, the best known bounds on $q$ are of the form $q\geq 1.809\Delta$~\cites{carlson2025flip,chen2024deterministic}.
It is an important folklore conjecture that such algorithms exist for $q\geq \Delta+1$.
See also~\cite{FriezeVigodasurvey}.
From this perspective a more than twenty year old conjecture of Sokal (see \cite{Jackson}*{Conjecture 21}, and Sokal~\cite{Sokalsurvey}*{Conjecture 9.5$^{\prime \prime}$}) is very relevant.
The conjecture says that for a graph $G$, $\chi_G(x) \not= 0$ if ${\rm Re}(x) > \Delta(G)$. 
A positive resolution of this conjecture would, via the interpolation method, fully resolve this folklore conjecture.
So far, Sokal's conjecture is wide open.

\subsection*{Approach}
Our approach builds on the approach taken by Jenssen, Patel, and Regts~\cite{JPR24}, which in turn takes inspiration from the polymer method originating in statistical physics.
In~\cite{JPR24}, an interpretation of the coefficients of the chromatic polynomial in terms of the number of broken-circuit-free forests with a given number of edges was used.
Inspired by the polymer method they gave an inductive proof of zero-freeness by essentially looking at the ratio of the chromatic polynomial of a graph $G$ and that of $G-v$ for some vertex $v$. 
They expand this ratio as a summation over broken-circuit-free trees rooted at $v$ of products of inverse ratios for smaller graphs, which by induction can be appropriately bounded. 
Using bounds on rooted tree-generating function, the original ratio can be bounded.
Making use of the combinatorial properties of these broken-circuit-free forests, they managed to cleverly replace certain terms in this expansion, eventually leading to a bound that improved the bound of Fern\'andez and Procacci~\cite{Fernandez}.

Our proof of Theorem~\ref{thm:main} goes along similar lines. 
However, a crucial difference is that we look at the ratio of the polynomial of a graph $G$ and that of $G\setminus e$ for an edge $e$. 
Although at first this may seem like a minor difference, we would like to stress here that a naive implementation of this idea would result in bounds that are worse than by using the vertex based approach.
To actually obtain better bounds required new ideas; see Lemma~\ref{lem:expand G/e} below. 
Additionally, our edge based approach leads to a somewhat more involved analysis.

To prove our result for claw-free graphs we use a different interpretation of the coefficients of the chromatic polynomial, in terms of certain partial acyclic orientations due to Greene and Zaslavsky~\cite{greene1983interpretation}. 
We reinterpret these orientations in terms of certain forests (different from the broken-circuit-free forests) and proceed in similar way as in~\cite{JPR24} using induction on the number of vertices. 
Making use of basic properties of claw-free graphs and the family of forests this allows us to improve the bound on their chromatic zeros.

\subsection*{Organization}
The next section contains the two combinatorial interpretations of the coefficients of the chromatic polynomial that we use to prove our main results. 
Section~\ref{sec:trees} contains results on the tree-generating functions that we need. 
In Section~\ref{sec:general} we prove Theorems~\ref{thm:main} and~\ref{thm:main large girth} and in Section~\ref{sec:claw-free} we prove Theorem~\ref{thm:main CF}. We conclude with some remarks in Section~\ref{sec:remarks}.

\section{Interpretations of the coefficients of the chromatic polynomial} 
In this section we recall two different interpretations of the coefficients of the chromatic polynomial that will be used to prove our main theorems. 
We note that the second interpretation is only used to prove our result on claw-free graphs and can hence be skipped if one is only interested in our proof of Theorems~\ref{thm:main} and~\ref{thm:main large girth}.
\subsection{Whitney's theorem}
Whitney~\cite{Whitney} gives a combinatorial description of the coefficients of the chromatic polynomial in terms of so-called \emph{broken-circuit-free sets}. 
To define this notion we need a fixed ordering of the collection of edges of graph $G$.
A \emph{broken-circuit-free set} in $G$ is a forest $F \subseteq E$ such that for each edge $e$ such that $F\cup \{e\}$ contains a cycle, $e$ is \emph{not} the largest edge in that cycle.
From now on, we abbreviate ``broken-circuit-free'' by BCF and we write $\mathcal{F}_G$ for the set of all BCF sets in $G$ (including the empty set). 
We define the polynomial
 \begin{align}\label{eq:FGDef}
     F_G(x) = \sum_{F \in \mathcal{F}_G}x^{|F|}.
 \end{align}

Whitney proved that up to a simple transformation the polynomial $F_G$ is equal to the chromatic polynomial.
\begin{theorem}[Whitney \cite{Whitney}]\label{thm:whitney}
If $G$ is an $n$-vertex graph, then 
\[
\chi_G(x) = x^nF_G(-1/x). 
\]
\end{theorem}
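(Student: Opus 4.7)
The plan is to deduce Whitney's theorem from the classical subgraph expansion of $\chi_G$ combined with a sign-reversing involution that kills every edge subset outside of $\mathcal{F}_G$. As a first step, I would establish
\[
\chi_G(x)=\sum_{S\subseteq E(G)}(-1)^{|S|}x^{c(S)},
\]
where $c(S)$ is the number of connected components of the spanning subgraph $(V(G),S)$. This follows by inclusion–exclusion: letting $A_e$ denote the set of $q$-colorings that monochromatise the edge $e=uv$, one has $|\bigcap_{e\in S}A_e|=q^{c(S)}$, and the number of proper $q$-colorings equals the resulting alternating sum, which is a polynomial identity in $q$ and hence holds for all $x$.

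Next, I would fix the edge order defining $\mathcal{F}_G$ and construct the involution. For each $S\subseteq E(G)$ with $S\notin\mathcal{F}_G$, let $e(S)$ be the smallest edge of $G$ that is either (a) contained in $S$ and is the largest edge of some cycle of $S$, or (b) not in $S$ but such that some broken circuit $C\setminus\{e(S)\}$ of $G$ lies in $S$. Such an edge exists: if $S$ contains a cycle $C$, then $\max C$ witnesses (a); otherwise $S$ is a forest containing a broken circuit $C\setminus\{\max C\}$ and $\max C$ witnesses (b). Define $\phi(S):=S\triangle\{e(S)\}$.

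The heart of the argument is to verify that $\phi$ is an involution that preserves $c(S)$ while flipping the parity of $|S|$. Preservation of $c$ is immediate since in case (a) we delete an edge lying on a cycle, while in case (b) we add an edge that closes the cycle $C$. The main technical obstacle is showing $e(\phi(S))=e(S)$: I would argue that any witness in $\phi(S)$ for a bad edge $e'<e(S)$ can be transferred back to a witness for $e'$ in $S$, because toggling the single edge $e(S)>e'$ cannot affect the status of edges strictly smaller than $e(S)$ with respect to cycles or broken circuits; this contradicts minimality of $e(S)$. Handling the two possibilities for the type of witness in $\phi(S)$ separately is routine but requires care.

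With $\phi$ in hand, the pairs $\{S,\phi(S)\}$ contribute zero to the subgraph expansion, leaving
\[
\chi_G(x)=\sum_{F\in\mathcal{F}_G}(-1)^{|F|}x^{c(F)}.
\]
Since every $F\in\mathcal{F}_G$ is a forest on $n$ vertices, $c(F)=n-|F|$, and the sum simplifies to $x^n\sum_{F\in\mathcal{F}_G}(-1/x)^{|F|}=x^n F_G(-1/x)$, which is Whitney's theorem.
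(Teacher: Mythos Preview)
Your argument is correct. The subgraph expansion $\chi_G(x)=\sum_{S\subseteq E}(-1)^{|S|}x^{c(S)}$ is standard, and your involution is well-defined: the key observation---that toggling $e(S)$ cannot create or destroy a witness for any $e'<e(S)$, because every edge of the relevant cycle $C'$ other than $e'=\max C'$ is strictly smaller than $e'$ and hence cannot equal $e(S)$---goes through cleanly in all four sub-cases. The component count is preserved since in case~(a) you delete an edge lying on a cycle and in case~(b) you add an edge whose endpoints are already joined by the path $C\setminus\{e(S)\}\subseteq S$.

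The paper does not supply its own proof here; it refers to~\cite{JPR24} for a short argument based on the deletion--contraction recurrence~\eqref{eq:classic del con}, namely showing that $F_G(x)=F_{G\setminus e}(x)+xF_{G/e}(x)$ for a suitably extremal edge $e$ and then matching this against the recurrence for $\chi_G$. Your route via the random-cluster expansion and a sign-reversing involution is genuinely different: it is non-inductive and gives a direct bijective explanation for why precisely the BCF forests survive. The deletion--contraction proof is shorter and parallels the paper's own proof of Lemma~\ref{lem:acyclic to chromatic}; your approach is more self-contained (it does not rely on~\eqref{eq:classic del con}) and makes the role of the edge ordering completely transparent.
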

Note that the theorem implies that even though $\mathcal{F}_G$ depends on the ordering of the edges of $G$, the polynomial $F_G$ does not.
We refer to~\cite{JPR24} for a short proof of Whitney's theorem based on the deletion-contraction recurrence.
For future reference we explicitly state the deletion-contraction recurrence for $F_G$ for a graph $G=(V,E)$. For any edge $e$ of $G$ that is not a loop we have
\begin{equation}\label{eq:deletion/contraction F}
    F_G(x)=F_G(x)+xF_{G/e}(x).
\end{equation}

\subsection{Acyclic orientations}\label{ssec:acyclic}
Another well known interpretations of the coefficients of $\chi_G$ is in term of \emph{acyclic orientations with a unique source} due to Greene and Zaslavsky~\cite{greene1983interpretation}*{Theorem 7.3} that we will now describe.
Let $H=(U,E)$ be a graph and let $v\in U$. 
An orientation $\omega$ of the edges of $H$ is called \emph{acyclic with unique source $v$} if the directed graph obtained from $H$ and $\omega$ does not contain directed cycles and has $v$ as its only source (i.e., all edges incident to $v$ are oriented away from $v$ while any other vertex has at least one edge oriented towards it).
We denote by $\text{AO}(H,v)$ the collection of all acyclic orientations of $H$ with $v$ as its unique source, and $\text{ao}(H,v)$ denotes its size. We note that $\text{ao}(H,v)=0$ in case $H$ is not connected (as otherwise there would be more than one source).

For a finite set $V$ we denote by $\mathcal{P}(V)$ the collection of partitions of $V$ into non-empty sets.
Let now $G=(V,E)$ be a graph equipped with an ordering of its vertices.
Consider the following polynomial
\begin{equation}\label{eq:partial orientations}
A_G(x):=\sum_{\pi\in \mathcal{P}(V)}\prod_{S\in \pi} \text{ao}(G[S],\min(S))x^{|S|-1},    
\end{equation}
where for $S\in \pi$, $\min(S)$ denotes the smallest element of $S$ with respect to the ordering of $V$.
A priori this polynomial may seem to depend on the choice of ordering.
Surprisingly this is not the case, as it follows directly from the next lemma.
\begin{lemma}[Greene and Zaslavsky~\cite{greene1983interpretation}]\label{lem:acyclic to chromatic}
Let $G$ be a graph with $n$ vertices. Then 
\[\chi_G(x)=x^nA_G(-1/x).\]
\end{lemma}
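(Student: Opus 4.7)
My plan is to derive $\chi_G(x)=x^n A_G(-1/x)$ by combining Whitney's subset expansion of $\chi_G$ with the classical Greene--Zaslavsky identity for the linear coefficient of the chromatic polynomial of a connected graph. First, using $\sum_{S \in \pi}(|S|-1)=n-|\pi|$, multiplying out shows the lemma is equivalent to the polynomial identity
\begin{equation*}
\chi_G(x) \;=\; \sum_{\pi \in \mathcal{P}(V)} x^{|\pi|} \prod_{S \in \pi}(-1)^{|S|-1}\text{ao}(G[S],\min(S)),
\end{equation*}
so this is what I would establish.

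I would start from Whitney's expansion $\chi_G(x)=\sum_{A \subseteq E}(-1)^{|A|}x^{c(V,A)}$, where $c(V,A)$ is the number of connected components of $(V,A)$, and group the sum by the partition $\pi_A$ of $V$ into these components. For fixed $\pi$, the admissible edge sets factor as $A=\bigsqcup_{S\in\pi}A_S$ with $A_S\subseteq E(G[S])$ and $(S,A_S)$ connected (any edge between blocks would coarsen $\pi_A$), giving
\begin{equation*}
\chi_G(x)=\sum_{\pi \in \mathcal{P}(V)} x^{|\pi|} \prod_{S \in \pi}\alpha(G[S]),
\qquad
\alpha(H):=\sum_{\substack{A\subseteq E(H)\\(V(H),A)\text{ connected}}}(-1)^{|A|},
\end{equation*}
with the convention $\alpha=1$ on singletons; note that $\alpha(H)=0$ when $H$ is disconnected, matching $\text{ao}(H,v)=0$ in that case, so only partitions whose every block induces a connected subgraph contribute.

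Comparing the two displays, it remains to prove the Greene--Zaslavsky identity
\begin{equation*}
\alpha(H)=(-1)^{k-1}\text{ao}(H,v)
\end{equation*}
for every connected graph $H$ on $k$ vertices and any $v\in V(H)$. I would argue by induction on $|E(H)|$. For the base case of a tree, only the full edge set spans $H$ connectedly, giving $\alpha(H)=(-1)^{k-1}$, while the unique element of $\text{AO}(H,v)$ is obtained by orienting every edge away from $v$. For the inductive step, pick an edge $e=vw$ incident to $v$; then $\chi_H=\chi_{H\setminus e}-\chi_{H/e}$ together with extracting the coefficient of $x^1$ gives $\alpha(H)=\alpha(H\setminus e)-\alpha(H/e)$. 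For $\text{ao}$, every $\omega\in\text{AO}(H,v)$ must orient $e$ as $v\to w$; partitioning $\text{AO}(H,v)$ according to whether $w$ has in-degree at least $2$ or exactly $1$ in $\omega$ should yield a bijection $\text{AO}(H,v)\leftrightarrow\text{AO}(H\setminus e,v)\sqcup\text{AO}(H/e,[v])$. Multiplying by $(-1)^{k-1}$ and using $(-1)^{k-1}=-(-1)^{k-2}$ converts this into the same recurrence as for $\alpha$, closing the induction.

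The main obstacle is verifying the bijective splitting for $\text{ao}$. The first branch is straightforward: deleting $e$ leaves $v$ as the unique source since $w$ retains an in-edge and other vertices are unaffected. The second branch is subtler, because contracting an edge can a priori create directed cycles. The critical observation is that if $w$ has in-degree exactly $1$ in $\omega$, then no directed $v$-to-$w$ path of length $\geq 2$ can exist (such a path would supply $w$ with a second in-edge), which precludes any cycle through the merged vertex $[v]$ in $\omega/e$; moreover, multi-edges created by the contraction are harmless, since parallel edges must agree in direction in any acyclic orientation and hence contribute no extra orientations. Once this bookkeeping is carried out, combining it with the Whitney grouping above proves the lemma.
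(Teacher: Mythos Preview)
Your argument is correct and takes a somewhat different route from the paper, though both ultimately rest on the same deletion--contraction identity for $\text{ao}$. The paper establishes the recurrence $A_G(x)=xA_{G/e}(x)+A_{G\setminus e}(x)$ directly for the full partition polynomial: it expands $A_G(x)=\sum_{S\ni u}\text{ao}(G[S],u)\,x^{|S|-1}A_{G-S}(x)$ with $u=\min V$ and $e=\{u,v\}$, splits the sum according to whether $v\in S$ and whether $e$ is a cut edge of $G[S]$, and in each case proves the requisite identity for $\text{ao}(G[S],u)$; combined with the usual deletion--contraction for $\chi_G$ this yields the lemma by induction on $|E|$. You instead invoke the subgraph expansion $\chi_G(x)=\sum_{A\subseteq E}(-1)^{|A|}x^{c(V,A)}$ (which the paper does not state) to factor $\chi_G$ over the partition into connected components, reducing everything to the single-block statement that the linear coefficient of $\chi_H$ equals $(-1)^{|V(H)|-1}\text{ao}(H,v)$ for connected $H$, and then prove that by deletion--contraction at an edge through $v$ via the bijection $\text{AO}(H,v)\leftrightarrow\text{AO}(H\setminus e,v)\sqcup\text{AO}(H/e,[v])$. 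Your approach is more modular and closer to Greene--Zaslavsky's original formulation, isolating the bijective content cleanly; the paper's approach avoids importing the subgraph expansion and stays entirely within the $A_G$ framework, at the cost of a heavier case analysis on the block containing the minimal vertex.
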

For completeness we will provide a proof of the lemma at the end of this section.
\begin{remark}\label{rem:AO do not depend on ordering}
Note that Lemma~\ref{lem:acyclic to chromatic} implies that for an $n$-vertex graph $G=(V,E)$ the coefficient of $x^{n-1}$ in $A_G(x)$ does not depend on the ordering.
Since this coefficient is equal to $\text{ao}(G,\min(V))$, we have $\text{ao}(G,v)=\text{ao}(G,u)$ for any two vertices $u,v\in V$.
\end{remark}

We will now use the acyclic orientations to interpret the chromatic polynomial as a generating function of certain forests, different from the BCF forests used above.

For a connected rooted graph $(H,v)$ we call a spanning tree $T$ of $(H,v)$ \emph{stable} if for each $k$ the collection of vertices at distance exactly $k$ from $v$ (in $T$) forms an independent set in $H$.
Given an orientation $\omega\in \text{AO}(H,v)$ we can construct a stable tree $T$ from it  as follows.
Fix an arbitrary ordering of the edges of $H$.
We let $V_0=\{v\}$ and once $V_0,\ldots, V_k$ have been determined, we let $V_{k+1}$ to be the collection of sources of the directed graph obtained from $H$ and its orientation by removing the vertices in $V_0\cup \ldots\cup V_k$. (Where we note that isolated vertices are sources by definition.)
The tree $T$ is obtained by adding edges between $V_k$ and $V_{k+1}$ for $k=0,\ldots$ as follows: by construction, each element in $V_{k+1}$ has at least one incoming edge from $V_k$ and we select the smallest such edge.
Clearly, the resulting tree $T$ is stable.
We denote the resulting tree $T=\phi(\omega)$, suppressing the dependence of the ordering of the edges of $H$.

\begin{lemma}\label{lem:AO --> stable}
Let $(H,v)$ be a connected rooted graph and fix an ordering of the edges of $H$.
The map $\phi:\text{AO}(H,v)\to \{\text{stable spanning trees of } (H,v)\}$ is injective.
\end{lemma}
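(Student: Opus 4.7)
The plan is to show injectivity by giving an explicit procedure that recovers $\omega$ from the tree $T=\phi(\omega)$, using only the tree-distance levels in $T$. The whole argument rests on a single observation: the BFS-like layers $V_0, V_1, V_2, \ldots$ produced during the construction of $\phi(\omega)$ coincide with the tree-distance layers in $T$.

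First I would establish this coincidence by a straightforward induction on $k$. We have $V_0=\{v\}$, and every vertex $u\in V_{k+1}$ has its chosen incoming tree-edge coming from a vertex in $V_k$, so its $T$-parent sits in $V_k$. By induction $V_k$ equals the set of vertices at tree-distance exactly $k$ from $v$ in $T$, so the layers $V_k$ are intrinsically determined by $T$ (even though the construction of $\phi$ also depends on the auxiliary edge ordering, the output layers do not).

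Next I would show that these layers in turn determine the orientation of every edge of $H$, which gives injectivity. Fix an edge $e=\{u,w\}$ of $H$ and let $j,k$ be the tree-distances from $v$ of $u$ and $w$ respectively, say $j\le k$. The equality $j=k$ is ruled out because $T$ is stable, so every tree-layer is independent in $H$. In the remaining case $j<k$, I would argue that $e$ must be oriented from $u$ to $w$: otherwise, in the directed graph obtained from $(H,\omega)$ by deleting $V_0\cup\cdots\cup V_{j-1}$, the vertex $u$ would have an incoming edge from $w$ (which has not yet been deleted, since $k>j$), contradicting the fact that $u\in V_j$ is a source of this subgraph. Hence the orientation of every edge of $H$ is forced by the level structure of $T$, so $\omega$ is fully determined by $T$ and $\phi$ is injective.

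I do not foresee a real obstacle; the only subtlety is that a vertex in $V_{k+1}$ may possess additional incoming edges from earlier layers $V_j$ with $j<k$, but these are harmless since our recovery rule for such edges reads off their direction from the levels alone and does not rely on whether or not they were selected by the construction of~$\phi$.
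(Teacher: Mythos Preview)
Your proposal is correct and follows essentially the same route as the paper: both arguments recover $\omega$ from $T$ by observing that the construction layers $V_k$ coincide with the tree-distance layers of $T$, and that every edge of $H$ must then be oriented from the lower layer to the higher one. Your write-up is in fact a bit more explicit than the paper's on the key point---why an edge between levels $j<k$ is forced to point towards level $k$---whereas the paper packages this as ``construct $\omega'$ by the level rule and observe $\omega=\omega'$''.
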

\begin{proof}
Let $T=\phi(\omega)$ for some orientation $\omega\in\text{AO}(H,v)$. 
We shall show that we can reconstruct $\omega$ from $T$ as follows.
Let for $k\in \mathbb{Z}_{\geq 0}$, $V_k$ be the set of vertices in $G$ that are at distance exactly $k$ from $v$ in $T$.
Orient all edges in $T$ from $V_k$ to $V_{k+1}$ for $k=0,1,\ldots$.
Any other edge of $H$ is of the form $\{u,u'\}$ with $u\in V_k$ and $u'\in V_{\ell}$ for some $k\neq \ell$ since $T$ is stable. We orient this edge towards $u'$ if $\ell>k$ and towards $u$ otherwise. 
The resulting orientation $\omega'$ is acyclic by construction and has $v$ as its unique source.
It follows inductively that $V_{k}$ is the collection of sources of the directed graph obtained from $H$ and $\omega'$ by removing the vertices in $V_0\cup \ldots\cup V_{k-1}$.
This implies that $\omega=\omega'$ and finishes the proof.
\end{proof}

Let now $G=(V,E)$ be a graph with a fixed ordering of its edges and of its vertices.
We denote by $\mathcal{F}^\star_G$ the collection of (edge sets of) forests $F$ of $G$ such that for each component $T=(S,T)$ of $F$, $T$ is the image of $\phi$ of an acyclic orientation in $\text{AO}(G[S],\min(S))$.
We will refer to these forests as \emph{$\star$-forests}.
The next lemma gives us our desired interpretation of the coefficients of $\chi_G$ in terms of the number of $\star$-forests.  
\begin{lemma}\label{lem:star forests}
Let $G$ be an $n$-vertex graph with a given ordering of its vertices and edges. Then
\begin{equation}\label{eq:star=bcf}
\sum_{F\in \mathcal{F}^\star_G} x^{|F|}=F_G(x)=(-x)^n\chi_G(-1/x).
\end{equation}
\end{lemma}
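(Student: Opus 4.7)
The plan is to combine the preceding three results (Whitney's theorem, Lemma~\ref{lem:acyclic to chromatic}, and Lemma~\ref{lem:AO --> stable}) via a single partition-sorted counting argument. Since Whitney's theorem gives $F_G(x) = (-x)^n \chi_G(-1/x)$ and Lemma~\ref{lem:acyclic to chromatic} gives $A_G(x) = (-x)^n \chi_G(-1/x)$ by the same substitution $x \mapsto -1/x$, the two middle expressions in \eqref{eq:star=bcf} are automatically equal; the content of the lemma reduces to showing
\[
\sum_{F \in \mathcal{F}^\star_G} x^{|F|} = A_G(x).
\]

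To prove this, I would group the sum on the left according to the partition $\pi$ of $V$ induced by the vertex sets of the connected components of $F$ (with isolated vertices of $G$ counted as singleton components, contributing a factor $\text{ao}(G[\{v\}], v)\, x^0 = 1$). For a fixed partition $\pi = \{S_1, \ldots, S_k\}$, a $\star$-forest with this component structure is exactly a choice, for each $S_i$, of a stable spanning tree of $G[S_i]$ that lies in the image of the map $\phi$ applied to $\text{AO}(G[S_i], \min(S_i))$. By Lemma~\ref{lem:AO --> stable}, $\phi$ is injective on $\text{AO}(G[S_i], \min(S_i))$, so the number of admissible trees on $S_i$ is exactly $\text{ao}(G[S_i], \min(S_i))$. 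Since such a forest has exactly $\sum_i (|S_i|-1)$ edges, multiplying over the parts of $\pi$ and summing over $\pi \in \mathcal{P}(V)$ yields
\[
\sum_{F \in \mathcal{F}^\star_G} x^{|F|} = \sum_{\pi \in \mathcal{P}(V)} \prod_{S \in \pi} \text{ao}(G[S], \min(S))\, x^{|S|-1} = A_G(x),
\]
which is the definition~\eqref{eq:partial orientations} of $A_G(x)$.

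Combining this with Lemma~\ref{lem:acyclic to chromatic} gives $\sum_{F \in \mathcal{F}^\star_G} x^{|F|} = (-x)^n \chi_G(-1/x)$, and Whitney's theorem (Theorem~\ref{thm:whitney}) identifies this quantity with $F_G(x)$, finishing the proof. There is no real obstacle here: the only subtlety is making sure one observes that $\mathcal{F}^\star_G$ was \emph{defined} via the image of $\phi$, so the injectivity from Lemma~\ref{lem:AO --> stable} is precisely what makes the count $|\phi(\text{AO}(G[S], \min(S)))| = \text{ao}(G[S],\min(S))$ valid on each part of the partition. (As a sanity check, this also re-proves that $A_G$ is independent of the chosen vertex ordering, since $F_G$ manifestly is, matching the remark following Theorem~\ref{thm:whitney}.)
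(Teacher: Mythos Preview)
Your proof is correct and follows essentially the same approach as the paper: the paper's proof is a terse ``by construction $A_G(x)=\sum_{F\in\mathcal{F}^\star_G}x^{|F|}$, then apply Lemma~\ref{lem:acyclic to chromatic} and Theorem~\ref{thm:whitney}'', and you have simply unpacked the ``by construction'' step via the partition-by-components argument and the injectivity of $\phi$ from Lemma~\ref{lem:AO --> stable}. One tiny wording slip: in your parenthetical you should say ``isolated vertices of the forest $(V,F)$'' rather than ``isolated vertices of $G$'', but this does not affect the argument.
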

\begin{proof}
By construction we have
\begin{equation}
A_G(x)=\sum_{F\in \mathcal{F}^\star_G} x^{|F|}.    
\end{equation}
By Lemma~\ref{lem:acyclic to chromatic} and Theorem~\ref{thm:whitney} it thus follows that~\eqref{eq:star=bcf} holds. 
\end{proof}

\begin{remark}\label{rem:star trees do not depend on ordering}
We note that by Remark~\ref{rem:AO do not depend on ordering} for an $n$-vertex graph $G=(V,E)$ the coefficient of $x^{n-1}$ in $\sum_{F\in \mathcal{F}^\star_G} x^{|F|}$ is equal to the number of $\star$-trees of $G$ for any ordering of $G$.        
\end{remark}

\begin{remark}
As was pointed out by one of the anonymous referees, it might be possible to interpret $\mathcal{F}^\star_G$ as the collection of forests arising from the \emph{Penrose partition scheme}~\cite{ProcaccietalremarkBCF}. 
Such an interpretation was used in~\cite{fialho2026zero} to provide improved bounds on the chromatic zeros for a subclass of claw-free graphs, where additionally two other small graphs are forbidden as induced subgraphs, namely the square and the diamond.     
\end{remark}

\subsection{Proof of Lemma~\ref{lem:acyclic to chromatic}}
We conclude this section by giving a proof of Lemma~\ref{lem:acyclic to chromatic}.

\begin{proof}[Proof of Lemma~\ref{lem:acyclic to chromatic}]
We prove this by induction on the number of edges of $G$. Let $n$ be the number of vertices of $G$.
In case the number of edges equals zero, the statement clearly holds.
Now consider a graph $G=(V,E)$ with a positive number of edges and an ordering of its vertices.
We may assume that $G$ is connected since both $A_G$ and $\chi_G$ are multiplicative over the components of $G$.
Let $e=\{u,v\}$ be an edge of $G$ with $u=\min(V)$.
Then we claim that 
\begin{equation}
A_{G}(x)=xA_{G/e}(x)+A_{G\setminus e}(x), \label{eq:del con}
\end{equation}
where we take the natural ordering on the vertices of $G/e$ coming from $V$ by taking the vertex obtained form contracting $e$ as the smallest vertex and, abusing notation, call this vertex $u$. We will prove~\eqref{eq:del con} below. 
First we note that it implies that
\begin{align*}
q^nA_G(-1/q)=q^nA_{G\setminus e}(-1/q)-q^{n-1}A_{G/e}(-1/q)=\chi_{G\setminus e}(q)-\chi_{G/e}(q)=\chi_G(q),
\end{align*}
where the first identity is by induction and the second by the well known deletion-contraction recurrence for the chromatic polynomial~\eqref{eq:classic del con}.

It remains to prove~\eqref{eq:del con}. Note that by definition we have
\[
    A_G(x)=\sum_{S: u\in S} \text{ao}(G[S],u) x^{|S|-1} A_{G\setminus S}(x),
\]
where the summation is over subsets $S$ of $V(G)$ containing $u$ such that $G[S]$ is connected. Now let us further break this summation into the following three parts:
\begin{align*}
V_0&=\{S\subseteq V(G)~|~u\in S, v\notin S \textrm{ and $G[S]$ is connected}\}, \\
V_1&=\{S\subseteq V(G)~|~u,v\in S \textrm{, $G[S]$ is connected and $e$ is a cut edge of $G[S]$ }\}, \\
V_2&=\{S\subseteq V(G)~|~u,v\in S \textrm{, $G[S]$ is connected and $e$ is not a cut edge of $G[S]$ }\}.
\end{align*}

If $S\in V_0$, then trivially $\text{ao}(G[S],u)=\text{ao}((G\setminus e)[S],u)$, and thus
\begin{align}\label{eq:ao_0}
    \sum_{S\in V_0} \text{ao}(G[S],u) x^{|S|-1} A_{G\setminus S}(x)=\sum_{S\in V_0} \text{ao}((G\setminus e)[S],u) x^{|S|-1} A_{(G\setminus e)\setminus S}(x).
\end{align}

If $S\in V_1$, then any $\omega\in AO(G[S],u)$ yields an acylic orientation $\omega'\in \text{AO}((G/e)[S-v],u)$ by deleting the oriented edge $e$, where we note that all edges incident to $v$ except $e$ are oriented outwards in $\omega$, as otherwise there would be another source.
Conversely, any acyclic orientation $\omega'$ in $\text{AO}((G/e)[S-v],u)$ yields an acyclic  orientation $\omega\in \text{AO}(G[S],u)$ by orienting $e$ towards $v$. Indeed, since in $\omega'$ all edges are oriented outwards from $u$, therefore  we have that $u$ is the unique source in $\omega$.
This means that
\begin{align*}
    \text{ao}(G[S],u)=\text{ao}((G/e)[S-v],u),
\end{align*}
and thus,
\begin{align}
    \sum_{S\in V_1} \text{ao}(G[S],u) x^{|S|-1} A_{G\setminus S}(x)
    &=\sum_{\substack{u\in S'\subseteq V(G/e) \\ S'\cup\{v\}\in V_1}} \text{ao}((G/e)[S'],u) x^{|S'|} A_{(G/e)\setminus S'}(x).\label{eq:ao_1}
\end{align}

If $S\in V_2$, then we know that $S$ is connected in $G\setminus e$.  
For any $\omega\in \text{AO}(G[S],u)$ the restriction of $\omega$ to $(G\setminus e)[S]$ forms an acyclic orientation with $u$ as its unique source precisely when not all other edges incident to $v$ in $(G\setminus e)[S]$ are all oriented outwards. 
In case all other edges incident to $v$ in $(G\setminus )[S]$ are all oriented outwards, then $\omega$ restricts to an acyclic orientation in $\text{AO}((G/e)[S-v],u)$. 
Conversely, any $\omega\in \text{AO}((G/e)[S-v],u)\cup \text{AO}((G\setminus e)[S],u)$ yields an acyclic orientation in $\text{AO}(G[S],u)$ by orienting the edge $e$ towards $v$.
We conclude that \[\text{ao}(G[S],u)=\text{ao}((G/e)[S-v],u)+\text{ao}((G\setminus e)[S],u),\]
and thus
\begin{align}
    \sum_{S\in V_2} \text{ao}(G[S],u) x^{|S|-1} A_{G\setminus S}(x)=&\sum_{\substack{u\in S'\subseteq V(G/e) \\ S'\cup\{v\}\in V_2}} \text{ao}((G/e)[S'],u) x^{|S'|} A_{(G/e)\setminus S'}(x)\nonumber
    \\
    &+\sum_{\substack{S\in V_2}} \text{ao}((G\setminus e)[S],u) x^{|S|-1} A_{(G\setminus e)\setminus S}(x). \label{eq:ao_2}
\end{align}

While adding up Equation~\eqref{eq:ao_0},\eqref{eq:ao_1} and \eqref{eq:ao_2} observe that $u\in S\subseteq V(G\setminus e)$ induces a connected subgraph in $G\setminus e$ if and only if $S\in V_0\cup V_2$, while $u\in S'\subseteq V(G/e)$ induces a connected subgraph of $G/e$ if and only if $S'\cup\{v\}\in V_1\cup V_2$. This means that

\begin{align*}
A_G(x)&=\sum_{i=0}^2\sum_{S\in V_i} \text{ao}(G[S],u) x^{|S|-1} A_{G\setminus S}(x)=xA_{G/e}(x)+A_{G\setminus e}(x),
\end{align*}
as we desired.
\end{proof}


\section{Preliminaries on trees}\label{sec:trees}
In this section we collect some preliminaries on tree-generating functions that will be used in the proof of our main results.

An acyclic connected graph is called a tree. For a tree $T$, we abuse notation by using $T$ to refer both to the graph and its edge set. 
For a graph $G=(V,E)$ and a subset of its vertices $U\subset V$, we denote by $\mathcal{T}_{G,U}$ the set of acyclic subgraphs $T\subset E$ of $G$ that consist of exactly $|U|$ components, each rooted at a unique vertex of $U$.
By $\mathcal{S}_{G,U}\subset \mathcal{T}_{G,U}$ we denote the set of those $T\in \mathcal{T}_{G,U}$ for which each component $C$ of $T$ is a stable tree, each rooted at a unique vertex of $U$.

For variable $x$ and $v\in V(G)$ we define the associated \emph{rooted tree-generating functions} by
\begin{align*}
T_{G,v}(x)&:=\sum_{T\in \mathcal{T}_{G,v}} x^{|T|},
\\
S_{G,v}(x)&:=\sum_{T\in \mathcal{S}_{G,v}} x^{|T|}.
\end{align*}
The following lemma shows how to bound these rooted tree-generating functions; the version for $T_{G,v}(x)$ appears somewhat implicitly in~\cite{JPSzeros} and can also be derived from the first part of Lemma 3.5 in~\cite{procacci2024zero}.
\begin{lemma}
\label{lem:tree}
Let $G=(V,E)$ be a graph of maximum degree at most $\Delta\geq 2$ and let $v\in V$ be a vertex of degree $d$.
Fix any $b\geq 0$.
If $d\leq \Delta-1$, then
\begin{align*}
T_{G,v}\left(\frac{b\left(1+\tfrac{b}{\Delta-1}\right)^{-(\Delta-1)}}{\Delta-1}\right)\leq \left(1+\tfrac{b}{\Delta-1}\right)^{d}.
\end{align*}
If $G$ is claw-free, and $d\leq \Delta,$ then
\begin{align*}
S_{G,v}\left(\frac{b}{(1+b+b^2/4)\Delta}\right)\leq 1+b+b^2/4.
\end{align*}
\end{lemma}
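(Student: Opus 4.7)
The plan is to prove both inequalities by induction on $|V(G)|$, based on a common decomposition of trees rooted at $v$. Given any $T \in \mathcal{T}_{G,v}$, let $S := N_T(v) \subseteq N_G(v)$ be the neighbors of $v$ inside $T$; removing $v$ from $T$ splits it into subtrees $T_u \in \mathcal{T}_{G-v,u}$ (for $u \in S$) with pairwise disjoint vertex sets, and $|T| = |S| + \sum_{u \in S}|T_u|$. Dropping the disjointness constraint yields the fundamental upper bound
\[
T_{G,v}(x) \;\leq\; \prod_{u \in N_G(v)} \bigl(1 + x\, T_{G-v,u}(x)\bigr). \qquad(\star)
\]
The same decomposition applied to a stable tree forces $S$ (which is exactly level one of $T$) to be independent in $G$, and each $T_u$ to be a stable tree of $G-v$ rooted at $u$, giving
\[
S_{G,v}(x) \;\leq\; \sum_{\substack{S \subseteq N_G(v)\\ S \text{ independent in }G}} x^{|S|} \prod_{u \in S} S_{G-v,u}(x). \qquad(\star\star)
\]

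For the first bound, I would set $\theta := 1 + b/(\Delta-1)$ and $x_0 := b\,\theta^{-(\Delta-1)}/(\Delta-1)$, so that $x_0\theta^{\Delta-1} = b/(\Delta-1)$. Any $u \in N_G(v)$ has degree at most $\Delta-1$ in $G-v$, so the inductive hypothesis yields $T_{G-v,u}(x_0) \leq \theta^{\Delta-1}$; plugging this into $(\star)$ gives $T_{G,v}(x_0) \leq \bigl(1 + x_0\theta^{\Delta-1}\bigr)^d = \theta^d$, as required. The base case $|V(G)|=1$ is immediate.

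For the second (claw-free) bound, I would set $\eta := 1 + b + b^2/4$ and $x_0 := b/(\eta\Delta)$, so that $x_0\eta = b/\Delta$. Claw-freeness of $G$ means $N_G(v)$ contains no independent set of size $3$, so $(\star\star)$ only involves terms with $|S|\in\{0,1,2\}$; moreover, the complement of $G[N_G(v)]$ is triangle-free, so Mantel's theorem bounds the number of non-edges inside $N_G(v)$ by $d^2/4$. Since $G-v$ is still claw-free of maximum degree at most $\Delta$, the inductive hypothesis gives $S_{G-v,u}(x_0) \leq \eta$, and substituting into $(\star\star)$ yields
\[
S_{G,v}(x_0) \;\leq\; 1 + d\,x_0\eta + \tfrac{d^2}{4}\,x_0^2\eta^2 \;=\; 1 + \tfrac{db}{\Delta} + \tfrac{1}{4}\Bigl(\tfrac{db}{\Delta}\Bigr)^{\!2} \;\leq\; \eta,
\]
where the last inequality uses $d \leq \Delta$ together with monotonicity of $t \mapsto 1 + t + t^2/4$ on $[0,\infty)$.

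No step looks especially hard; the main points to verify carefully are the validity of the decompositions $(\star)$ and $(\star\star)$ after relaxing vertex-disjointness, the cleanness of the induction on $G-v$ (using that $G-v$ inherits the maximum-degree bound and, in the second part, claw-freeness), and the use of Mantel's theorem on the complement of $G[N_G(v)]$ to extract the combinatorial factor $d^2/4$ in the claw-free setting.
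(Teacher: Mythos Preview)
Your proposal is correct and follows essentially the same approach as the paper's own proof: induction on $|V(G)|$, decompose a rooted tree according to the set $S=N_T(v)$, drop the vertex-disjointness constraint to bound by a product of single-root generating functions on $G-v$, and in the claw-free case restrict to $|S|\le 2$ and invoke Mantel's theorem on the complement of $G[N(v)]$. The only cosmetic differences are that the paper writes the first bound as a sum over subsets of $N(v)$ rather than the equivalent product form $(\star)$, and bounds the number of size-$2$ independent sets directly by $\Delta^2/4$ rather than first by $d^2/4$.
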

\begin{proof}
A short proof for a slight variation of the first statement can be found in~\cite{PRsurvey}.
We follow that proof to prove the statement provided here.

Let us write $f(b)=\left(1+\tfrac{b}{\Delta-1}\right)$.
We prove the statement by induction on the number of vertices, the base case being trivial.
Now consider a non-empty graph $G$ of maximum degree at most $\Delta$ with a vertex $v$ of degree $d\leq \Delta-1$.
Then for $c=\tfrac{bf(b)^{-\Delta-1}}{\Delta-1}$, we have
\begin{align}\label{eq:tree step one}
    T_{G,v}(c)=\sum_{S\subseteq N(v)}c^{|S|}\sum_{F\in \mathcal{T}_{G-v,S}} c^{|F|}.
\end{align}
Since each forest $F\in \mathcal{T}_{G-v,S}$ can be decomposed into $|S|$ trees in $G-v$, we can bound
\begin{align*}
\sum_{F\in \mathcal{T}_{G-v,S}} c^{|F|}\leq \prod_{s\in S}T_{G-v,s}(c),    
\end{align*}
which by induction we can further bound by $f(b)^{|S|(\Delta-1)}$, since each vertex $s\in S$ has degree at most $\Delta-1$ in the graph $G-v$.
Plugging this in into~\eqref{eq:tree step one} we find that
\[
 T_{G,v}(c)\leq \sum_{S\subseteq N(v)}c^{S}f(b)^{|S|(\Delta-1)}=\sum_{S\subseteq N(v)} \left(\frac{b}{\Delta-1}\right)^{|S|}=\left(1+\tfrac{b}{\Delta-1}\right)^{d},
\]
as desired.

To prove the second statement we will follow the same proof idea.
The proof is by induction on the number of vertices of $G$, the base case being trivial.
Let us write $f(b)=1+b+b^2/4$ and $c=\tfrac{b}{f(b)\Delta}$.
Then 
\begin{align*}
    S_{G,v}(c)&=\sum_{\substack{U\subseteq N(v)\\U\text{ independent}}}c^{|U|}\sum_{\substack{F\in \mathcal{S}_{G-v,U}\\ F\cup vU\in \mathcal{S}_{G,v}}} c^{|F|}
    \leq \sum_{\substack{U\subseteq N(v)\\U\text{ independent}}}c^{|U|}\sum_{\substack{F\in \mathcal{S}_{G-v,U}}} c^{|F|}
    \\
    &\leq \sum_{\substack{U\subseteq N(v)\\U\text{ independent}}}c^{|U|}\prod_{u\in U}S_{G-v,u}(c)\leq \sum_{\substack{U\subseteq N(v)\\U\text{ independent}}}c^{|U|}f(b)^{|U|}.
\end{align*}
Here the last two inequalities follow since each $F\in \mathcal{S}_{G-v,U}$ can be decomposed as the disjoint union of $|U|$ rooted trees in $G-v$ each of which is contained in $\mathcal{S}_{G-v,u}$ for some $u\in U$ and since $G-v$ is again claw-free it follows by induction that $S_{G-v,u}(c)\leq f(b)$ for each $u\in U$.

Since $G$ is claw-free the neighbourhood of $N(v)$ only has independent sets of size $0,1,2$. 
By Tur\'an's theorem (or rather Mantel's theorem) applied to the complement of $G[N(v)]$ (which is triangle free) we know that the number of independent sets of size $2$ in $G[N(v)]$ is at most $|N(v)|^2/4\leq \Delta^2/4$.
Therefore 
\begin{align*}
 S_{G,v}(c)\leq 1+c\Delta f(b)+c^2(\Delta^2/4) f(b)^2\leq 1+b+b^2/4,
\end{align*}
as desired.
\end{proof}

The next lemma will be used to prove our result on large girth graphs. As in \cite{JPR24} and \cite{procacci2024zero}, the girth condition will influence the type/size of the trees appearing in the rooted tree-generating function. 

Let us define for a graph $G$, $v\in V(G)$ and $k\in \mathbb{N}$ the set 
\[
    \mathcal{T}_{G,v;k}=\{T\in \mathcal{T}_{G,v} ~|\text{ there exists } u\in V(T): \textrm{dist}_{T}(u,v)\ge k\}.
\]
Let us denote the corresponding generating function as
\[
    T_{G,v;k}=\sum_{T\in\mathcal{T}_{G,v;k}}x^{|T|}.
\]
The next lemma bounds this generating function; it also be derived from the second part of Lemma 3.5 in~\cite{procacci2024zero}.
\begin{lemma}\label{lem:large_rooted_tree}
Let $\Delta\ge 2$. Then for any graph $G$ with maximum degree $\Delta$, $v\in V(G)$ of degree at most $\Delta-1$ and $b\ge 0$ we have 
\[
T_{G,v;k}\left(\frac{b\left(1+\tfrac{b}{\Delta-1}\right)^{-(\Delta-1)}}{\Delta-1}\right)\leq e^b \left(\frac{b}{1+\tfrac{b}{\Delta-1}}\right)^k.
\]
\end{lemma}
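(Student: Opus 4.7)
The plan is to prove the bound by induction on $k$, following the same pattern as the proof of Lemma~\ref{lem:tree}. Throughout, abbreviate $f(b):=1+b/(\Delta-1)$ and $x:=b f(b)^{-(\Delta-1)}/(\Delta-1)$. The base case $k=0$ is immediate, since $\mathcal{T}_{G,v;0}=\mathcal{T}_{G,v}$ and Lemma~\ref{lem:tree} together with the standard inequality $f(b)^{\Delta-1}\leq e^b$ gives $T_{G,v;0}(x)\leq f(b)^d\leq e^b=e^b(b/f(b))^0$.

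For the inductive step, I would use the following decomposition. Every $T\in\mathcal{T}_{G,v;k}$ is determined by the set $S\subseteq N(v)$ of neighbors of $v$ used by $T$ together with, for each $u\in S$, a subtree $T_u$ rooted at $u$ in $G-v$. The depth-$k$ condition forces at least one of these $T_u$ to lie in $\mathcal{T}_{G-v,u;k-1}$; I designate one such $u$ as a "witness". Summing over the choice of witness (which overcounts, since there may be several), and relaxing the vertex-disjointness between the remaining subtrees (which also only overcounts), I obtain
\[
T_{G,v;k}(x)\;\leq\;\sum_{u\in N(v)} x\, T_{G-v,u;k-1}(x)\prod_{u'\in N(v)\setminus\{u\}}\bigl(1+x\, T_{G-v,u'}(x)\bigr).
\]
Now each factor is bounded. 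Every $u'\in N(v)$ has degree at most $\Delta-1$ in $G-v$, so Lemma~\ref{lem:tree} yields $T_{G-v,u'}(x)\leq f(b)^{\Delta-1}$, hence $1+x\,T_{G-v,u'}(x)\leq 1+b/(\Delta-1)=f(b)$. The inductive hypothesis on $k-1$ (applied in $G-v$, which still has maximum degree at most $\Delta$, at the vertex $u$ of degree at most $\Delta-1$ there) gives $T_{G-v,u;k-1}(x)\leq e^b(b/f(b))^{k-1}$. Writing $d=\deg(v)\leq\Delta-1$ and substituting,
\[
T_{G,v;k}(x)\;\leq\; d\cdot x\cdot f(b)^{d-1}\cdot e^b\bigl(b/f(b)\bigr)^{k-1} \;=\;\tfrac{d}{\Delta-1}\cdot b\cdot f(b)^{d-\Delta}\cdot e^b\bigl(b/f(b)\bigr)^{k-1}\;\leq\; e^b\bigl(b/f(b)\bigr)^{k},
\]
where the last step uses $d\leq\Delta-1$ (so $d/(\Delta-1)\leq 1$ and $f(b)^{d-\Delta}\leq 1/f(b)$).

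The main obstacle is engineering the decomposition so that exactly the factor $b/f(b)$ — and not something coarser like $b$ or $f(b)^{\Delta-1}x$ — is picked up at each recursive step. The trick is that the witness edge contributes a single $x$, whereas the up-to-$\Delta-2$ non-witness neighbors are each absorbed into a factor of $f(b)$ via Lemma~\ref{lem:tree}; the product $x\cdot f(b)^{d-1}$ telescopes with the extra $f(b)^{-(\Delta-1)}$ inside $x$ to produce exactly one factor of $b/((\Delta-1)f(b))$ per level of recursion. A minor degenerate case, $d=0$, is automatic since then $\mathcal{T}_{G,v;k}=\emptyset$ for $k\geq 1$.
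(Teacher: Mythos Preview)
Your proof is correct. The decomposition, the witness overcounting, and the arithmetic all check out; in particular the crucial point that every $u'\in N(v)$ has degree at most $\Delta-1$ in $G-v$ (so both Lemma~\ref{lem:tree} and the inductive hypothesis apply there) is handled properly.

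The paper takes a different, non-inductive route: it observes that every $T\in\mathcal{T}_{G,v;k}$ contains a rooted path $P$ of length $k$, sums over all such paths (of which there are at most $(\Delta-1)^k$), and for each $P$ bounds the trees containing $P$ by $x^{|P|}\prod_{u\in V(P)} T_{G-P,u}(x)$, using Lemma~\ref{lem:tree} on each factor. Your level-by-level induction is essentially the unrolled version of this path decomposition, but packaged more cleanly: it mirrors the proof of Lemma~\ref{lem:tree} exactly, picking up one factor of $b/f(b)$ per level rather than assembling all $k$ factors at once. The paper's argument is a one-shot computation with no auxiliary induction; yours is more modular and makes the parallel with Lemma~\ref{lem:tree} transparent. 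Both yield the same bound with the same amount of slack.
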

\begin{proof}
Similarly to the previous proof, let $f(b)=1+\tfrac{b}{\Delta-1}$. 
Let $\mathcal{P}_{G,v;k}$ denote the collection of paths in $G$ that are rooted at $v$ and consist of $k$ edges. Trivially, $|\mathcal{P}_{G,v;k}|\le (\Delta-1)^{k}$. Then, using Lemma~\ref{lem:tree} and writing $x=\tfrac{bf(b)^{-(\Delta-1)}}{\Delta-1}$ we have
\begin{align*}
T_{G,v; k}(x)&\le \sum_{P\in \mathcal{P}_{G,v;k}}\sum_{P\subseteq T\in\mathcal{T}_{G,v}}x^{|T|}\\
&\le \sum_{P\in \mathcal{P}_{G,v;k}}x^{|P|}\prod_{u\in V(P)}\sum_{T\in\mathcal{T}_{G-P,u}}x^{|T|}\\
&\le \sum_{P\in \mathcal{P}_{G,v;k}}x^{|P|}\prod_{u\in V(P)}f(b)^{\deg_{G-P}(u)}.
\end{align*}
Now using that for $P\in \mathcal{P}_{G,v;k}$ each vertex $u\in V(P)$ has degree at most $\Delta-2$ except the last vertex of the path, we can bound the previous quantity by
\begin{align*}
\le\left(\tfrac{b}{f(b)}\right)^kf(b)^{\Delta-1}\leq e^b \left(\tfrac{b}{f(b)}\right)^k,
\end{align*}
as desired.

\end{proof}

\section{Bounds for all graphs}\label{sec:general}
In this section, we will state and prove our main result for all graphs from which we will derive the bounds stated in Theorem~\ref{thm:main} and Theorem~\ref{thm:main large girth} in the introduction.
We will only use the interpretation of $F_G(x)$ as the generating function of BCF forests in a graph $G$.
For a graph $G$ with a given ordering of its edges and a  vertex $v$ we denote by $\mathcal{F}_{G,v}$ the collection of BCF trees that are rooted at $v$.

In our proof we will need the following lemma.

\begin{lemma}\label{lem:expand G/e}
Let $G=(V,E)$ be a graph with an ordering of the edges and let $e=\{u,v\}$ be the smallest edge. Assume that the other edges incident to $v$ are the largest edges.   
Then
\begin{equation}\label{eq:contraction recurrence}
F_{G/e}(x)=\sum_{\substack{T\in \mathcal{F}_{G\setminus e,u}\\ V(T)\cap N_{G\setminus e}(v)=\emptyset}}x^{|T|}F_{G\setminus V(T)}(x),    
\end{equation}
where $N_{G\setminus e}(v)$ denotes the collection of neighbors of $v$ in the graph $G\setminus e$.
\end{lemma}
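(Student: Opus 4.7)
My plan is to reduce the statement to a combinatorial identity over broken-circuit-free subsets of $G$ (rather than of $G/e$) via the classical deletion-contraction recurrence, and then to prove this identity by a direct bijection that crucially uses both ordering hypotheses.

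First, I apply the deletion-contraction identity $\chi_G=\chi_{G\setminus e}-\chi_{G/e}$ together with Whitney's theorem (substituting $y=-1/x$) to obtain the polynomial identity $yF_{G/e}(y)=F_G(y)-F_{G\setminus e}(y)$. Since $e$ is the smallest edge of $G$, any $F\subseteq E(G)$ with $e\notin F$ is BCF in $G$ if and only if it is BCF in $G\setminus e$ (the only extra constraint in $G$ concerns cycles through $e$, which is vacuous as $e$ is smallest). Consequently,
\[
F_{G/e}(y)=\sum_{\substack{F_0\subseteq E(G\setminus e)\\ F_0\cup\{e\}\text{ BCF in }G}} y^{|F_0|}.
\]

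Next, I set up the bijection to the right-hand side of the lemma. Given such an $F_0$, let $T$ be the connected component of $F_0$ containing $u$ and let $F'=F_0\setminus T$ be the union of the remaining components. Since $F_0\cup\{e\}$ is a forest, $v\notin V(T)$, so $F'\subseteq E(G\setminus V(T))$ and $|F_0|=|T|+|F'|$; the inverse map $(T,F')\mapsto T\cup F'$ is immediate. It then remains to prove the equivalence
\[
F_0\cup\{e\}\text{ is BCF in }G \iff T\in\mathcal{F}_{G\setminus e,u},\ F'\in\mathcal{F}_{G\setminus V(T)},\ \text{and}\ V(T)\cap N_{G\setminus e}(v)=\emptyset,
\]
after which the lemma follows by summing over $(T,F')$ and factoring $x^{|T|}$.

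This equivalence is the main obstacle. I would prove it by case analysis on the position of a candidate added edge $e'$ relative to $V(T)$: edges $e'$ with both endpoints in $V(T)$ yield exactly the BCF condition for $T$ in $G\setminus e$, and edges $e'$ disjoint from $V(T)$ yield the BCF condition for $F'$ in $G\setminus V(T)$. The neighborhood constraint arises from $e'=\{a,v\}$ with $a\in V(T)\setminus\{u\}$: since $e'$ is among the largest edges of $G$, it would close a cycle consisting of the $T$-path from $u$ to $a$, $e'$, and $e$, in which $e'$ is forced to be the largest edge, contradicting BCF; hence $V(T)\cap N_{G\setminus e}(v)$ must be empty. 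For the reverse direction, the only delicate case is $e'=\{a,b\}$ with $a\in V(T)$ and $b\in V(G)\setminus(V(T)\cup\{v\})$, which closes a cycle through $e$ via a path from $b$ to $v$ in $F'$; this $F'$-path ends with an edge incident to $v$, which by hypothesis is among the largest edges of $G$ and in particular strictly larger than $e'$, so $e'$ is not the largest in the cycle and the BCF condition is preserved. Both ordering hypotheses play essential and complementary roles here: $e$ being smallest lets us harmlessly ignore cycles through $e$, while $v$'s other edges being largest both enforces the $N_{G\setminus e}(v)$ constraint and dominates any crossing edge in its cycle.
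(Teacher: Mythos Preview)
Your proposal is correct and follows essentially the same route as the paper. The only cosmetic difference is in how you reach the intermediate identity $F_{G/e}(y)=\sum_{F_0:\,F_0\cup\{e\}\in\mathcal F_G}y^{|F_0|}$: you derive it from Whitney's theorem together with the chromatic deletion--contraction recurrence, whereas the paper argues it directly by noting that, since $e$ is the smallest edge, $F\mapsto F\setminus e$ is a bijection between BCF forests of $G$ containing $e$ and BCF forests of $G/e$. From that point on, your decomposition $F_0=T\cup F'$ (with $T$ the $u$-component) and your case analysis on the position of a candidate edge $e'$ match the paper's splitting into $T_u$ and $T_v\cup F'$ and its verification that $T_u\cup e\cup T_v$ is BCF; in particular, both arguments use that the non-$e$ edges at $v$ are largest in exactly the two places you identify.
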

\begin{proof}
First, we note that since $e$ is the smallest edge in $G$, any BCF forest $F$ in $G$ containing the edge $e$ uniquely corresponds to a BCF forest $\hat F=F\setminus e$ in $G/e$.
In particular, 
\[F_{G/e}(x)=\sum_{\substack{F\in \mathcal{F}(G)\\  e\in F}} x^{|F|-1}.\]
Similarly, any BCF forest not containing $e$ uniquely corresponds to a BCF forest in $G\setminus e$.

Any BCF forest $F$ that contains $e$ can be split into a BCF tree $T$ that contains $e$ and a BCF forest $F'$ in $G\setminus V(T)$.
Let $T_u$ (resp. $T_v$) be the component of $T\setminus e$ that contains $u$ (resp. $v$). 
Then $T_u$ is a BCF tree in $G\setminus e$ and it does not intersect $N_{G\setminus e}(v)$, since otherwise $T$ would not be BCF in $G$ as some edge $f$ incident to $v$ would be the largest edge in the cycle in $T\cup f$.
Thus, we  see that the contribution of $F$ to $F_{G/e}(x)$ is the same as that of $T_u\cup (T_v\cup F')$ to the right-hand side of~\eqref{eq:contraction recurrence}.

Conversely, suppose $T_u\in \mathcal{F}_{G\setminus e,u}$  and $F'\in \mathcal{F}_{G\setminus V(T_u)}$ such that $V(T_u)$ does not intersect $N_{G\setminus e}(v)$.
Let $T_v$ be the component of $F$ that contains $v$ (note that $T_v$ could possible be empty).
Since $V(T_u)$ does not intersect $N_{G\setminus e}(v)$, we know that $T_u\cup e\cup T_v$ is BCF in $G$, because for any edge $f$ such that $T\cup f$ contains a cycle $C$, we know that $f$ is not the largest edge in $C$.
Indeed, if $C\subseteq T_u\cup f$ or if $C\subseteq T_v \cup f$, we know this to be true, since otherwise $T_u$ or $T_v$ would not be BCF. On the other hand, if $C$ uses the edge $e$, then it has to use another edge incident to $v$, say $e'$, which cannot be the edge $f$ (since $V(T_u)$ does not intersect $N_{G\setminus e}(v)$). This edge $e'$ is larger than $f$ by assumption, therefore $T$ is indeed BCF.
It follows that the contribution of $T_u\cup F'$ to the right-hand side of~\eqref{eq:contraction recurrence} is the same as that of $F=T_u \cup e\cup F'$ to the left-hand side.

This finishes the proof.
\end{proof}


Recall that for a graph $G=(V,E)$, $v\in V$ and $k\geq 0$, we denote by $T_{G,v;k}(x)$ the generating function of trees rooted at $v$ that contain a path rooted at $v$ of length at least $k$.
Define for positive integers $\Delta\geq 2$ and $g\geq 3$, $\mathcal{G}_{\Delta,g}$ to be the collection of rooted graphs $(G,v)$ of maximum degree at most $\Delta$ and girth at least $g$ such that the root, $v$, has degree at most $\Delta-1$.
Let for $b>0$, 
\begin{equation}\label{eq:define tree upper bound}
    t_{\Delta,g,i}(b):=\sup_{(G,v)\in \mathcal{G}_{\Delta,g}, \deg(v)\leq i} T_{G,v;g-2}\left(\frac{b(1+\tfrac{b}{\Delta-1})^{-(\Delta-1)}}{\Delta-1}\right),
\end{equation}
noting that by Lemma~\ref{lem:large_rooted_tree} this supremum is finite. Also note that by definition, $t_{\Delta,g,0}(b)=1$ for any $g,\Delta$ and $b$.
We now state and prove a rather technical condition guaranteeing a zero-free disk for $F_G$, after which we deduce more concrete bounds.

\begin{theorem}\label{thm:main general}
Let $\Delta\geq 2$ and $g\geq 3$ be integers. 
Let $a\in (0,1)$ and $b>0$. 
Let for $i=0,\ldots,\Delta-1,$
\begin{equation}\label{eq:define a_i}
    a_i:=\frac{b\left(1-a+ t_{\Delta,g,i}(b)\right)}{(\Delta-1) \left(1+\tfrac{b}{\Delta-1}\right)^{\Delta-1}}.
\end{equation}
Suppose that
\begin{itemize}
    \item $a_i<1$ for $i=0,\ldots,\Delta-1$, and
    \item $\prod_{i=0}^{\Delta-2}(1-a_i)\geq 1-a$.
\end{itemize}
Then for any $z\in \mathbb{C}$ such that 
\[|z|\leq \frac{b(1-a)}{(\Delta-1)(1+\tfrac{b}{\Delta-1})^{\Delta-1}}\]
and any graph $G$ of maximum degree at most $\Delta$ and girth at least $g$ we have $F_G(z)\neq 0$.
\end{theorem}
\begin{proof}
Fix $z$ such that $|z|\leq \frac{b(1-a)}{(\Delta-1)(1+\tfrac{b}{\Delta-1})^{\Delta-1}}$.
We will prove the the following three statements by induction on the number of edges:
\begin{itemize}
    \item[(i)] for any edge $e$ and any vertex $u\in e$, $\left|\frac{F_G(z)}{F_{G\setminus e}(z)}-1\right|\leq a_{\deg(u)-1}$,
    \item[(ii)] $F_G(z)\neq 0$,
    \item[(iii)] for any vertex $u$ of degree at most $\Delta-1$, $\left|\frac{F_{G-u}(z)}{F_{G}(z)}\right|\leq \frac{1}{1-a}$.
\end{itemize}
Clearly, if $G$ consists of a single edge, all three statements are true, since then $\tfrac{F_G(z)}{F_{G\setminus e}}=1+z$ and so (i) follows by assumption since $a_0=\frac{b(1-a)}{(\Delta-1)(1+\tfrac{b}{\Delta-1})^{\Delta-1}}$.
Part (ii) follows since $a_0<1$ and (iii) follows since 
\[
\left |\frac{F_{G-u}(z)}{F_G(z)}\right |=\left |\frac{1}{1+z}\right |\leq \frac{1}{1-a_0}\leq \frac{1}{1-a},
\]
by assumption.

Now assume that $G$ is a graph of maximum degree at most $\Delta$ and girth at least $g$ with more than one edge.
Note that item (i) implies item (ii), since by induction $F_{G\setminus e}(z)\neq 0$ for any edge $e$ and all $a_i$ are contained in $(0,1)$.

Secondly, statements (i) and (ii) imply statement (iii).
Indeed, by (ii) $F_G(z)\neq 0$. Write $e_1,\ldots,e_d$ for the edges incident with $u$ and denote $G_d=G$ and $G_i=G_{i+1}\setminus e_{i+1}$ for $i=0,\ldots,d-1$.
Note that $G_0=G-u$ and that the degree of $u$ in $G_{i}$ is equal to $i.$
Then 
\begin{equation}\label{eq:telescope from vertex to edge}
 \frac{F_{G-u}(z)}{F_{G}(z)}=\prod_{i=1}^{d} \frac{F_{G_{i}\setminus e_i(z)}}{{F_{G_{i}}}(z)}. \end{equation}
By (i) and induction we have 
\[
\left|\frac{F_{G_i\setminus e_i}(z)}{F_{G_i}(z)}\right|\leq \frac{1}{1-a_{i-1}}.
\]
Therefore, \eqref{eq:telescope from vertex to edge} is bounded by $\prod_{i=0}^{d-1}\tfrac{1}{1-a_i}\leq \prod_{i=0}^{\Delta-2}\tfrac{1}{1-a_i}\leq \tfrac{1}{1-a}$ by the assumptions of the theorem.

We thus focus on proving item (i) for any given edge $e$ of $G$ and $u\in e$.
Let us write $e=\{u,v\}$ and choose an ordering of the edges of $G$ in which $e$ is the smallest edge and the other edges incident to $v$ are the largest edges in the ordering of the edges of $G$.
Then by the deletion-contraction recurrence~\eqref{eq:deletion/contraction F}, 
\begin{equation}\label{eq:del/con in ratio}
\frac{F_G(z)}{F_{G\setminus e}(z)}-1=z\frac{F_{G/e}(z)}{F_{G\setminus e}(z)}=z+z\frac{F_{G/e}(z)-F_{G\setminus e}(z)}{F_{G\setminus e}(z)}.
 \end{equation}
By Lemma~\ref{lem:expand G/e} the difference $F_{G\setminus e}(z)-F_{G/e}(z)$ is given by
\[
\sum_{\substack{T\in \mathcal{F}_{G\setminus e,u}\\ V(T)\cap 
N_{G\setminus e}(v)\neq \emptyset}} z^{|T|}F_{G\setminus V(T)}(z).
\]
Any tree appearing in this sum contains a path rooted at $u$ with at least $g-2\ge 1$ edges, since the girth of $G$ is at least $g$.
Therefore, using the triangle inequality, we can bound the absolute value of the left-hand side of~\eqref{eq:del/con in ratio} by
\begin{equation}\label{eq:tree gen in proof}
|z|+\sum_{\substack{T\in \mathcal{T}_{G\setminus e,u;g-2}}}|z|^{|T|+1}\left|\frac{F_{G\setminus V(T)}}{F_{G\setminus e}(z)}\right|.
\end{equation}
Fix a tree $T$ appearing in this sum and write $V(T)=\{u=u_1,\ldots,u_k\}$ such that $u_i$ has at least one neighbour in $\{u_1,\ldots,u_{i-1}\}$ for $i\geq 2$. Denote $G_0=G\setminus e$ and for $i=1,\ldots, k$ $G_i=G_{i-1}-u_i$.
Then 
\begin{equation}\label{eq:telescope}
\frac{F_{G\setminus V(T)}(z)}{F_{G\setminus e}(z)}=\prod_{i=1}^k \frac{F_{G_{i-1}-u_i}(z)}{F_{G_{i-1}}(z)}.
\end{equation}
Since the degree of $u_i$ in $G_{i-1}$ is at most $\Delta-1$, we can apply induction and (iii) to bound 
\[
\left|
\frac{F_{G\setminus V(T)}(z)}{F_{G\setminus e}(z)}\right|\leq (1-a)^{-|V(T)|}.
\]
We can thus bound~\eqref{eq:tree gen in proof} by
\begin{align*}
&|z|+\frac{|z|}{1-a}\sum_{T\in \mathcal{T}_{G\setminus e,u;g-2}}\left(\frac{|z|}{1-a}\right)^{|T|} 
\\
\leq &|z|+\frac{|z|}{1-a}T_{G\setminus e,u;g-2}\left(\frac{b}{(\Delta-1)(1+\tfrac{b}{\Delta-1})^{\Delta-1}}\right)^{|T|}
\\
\leq &|z|+\frac{|z|}{1-a}t_{\Delta,g,\deg(u)-1}(b)= a_{\deg(u)-1},
\end{align*}
as desired.
This finishes the proof.
\end{proof}

\subsection{Bounds for all graphs}
We now derive a few consequences of Theorem~\ref{thm:main general} allowing us to prove concrete bounds on the zeros of the chromatic polynomial for general graphs of bounded degree.

\begin{proposition}\label{prop:bound general}
Let $\Delta\geq 2$ be an integer. 
Suppose that $a\in (0,1)$ and $b\in [0,\Delta-1]$ are such that
\[
    \prod_{i=0}^{\Delta-2} \left(1-\frac{b\left((1+\tfrac{b}{\Delta-1})^i-a\right)}{(\Delta-1) \left(1+\tfrac{b}{\Delta-1}\right)^{\Delta-1}}\right) \geq 1-a.
\]
Then for any $z\in \mathbb{C}$ such that 
\[|z|\leq \frac{b(1-a)}{(\Delta-1)(1+\tfrac{b}{\Delta-1})^{\Delta-1}}\]
and for any graph $G$ of maximum degree at most $\Delta$, we have $F_G(z)\neq 0$.
\end{proposition}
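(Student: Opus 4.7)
The plan is to derive Proposition~\ref{prop:bound general} as a direct specialization of Theorem~\ref{thm:main general} to girth $g=3$, together with the explicit bound on the rooted tree generating function supplied by Lemma~\ref{lem:tree}. Since every simple graph has girth at least $3$, the girth restriction is vacuous, so Theorem~\ref{thm:main general} with $g=3$ applies to every graph of maximum degree at most $\Delta$. The only task is therefore to replace the quantity $t_{\Delta,3,i}(b)$ appearing in the hypothesis of Theorem~\ref{thm:main general} with a concrete upper bound that depends only on $\Delta$, $b$ and $i$.

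First I would unpack what $t_{\Delta,3,i}(b)$ is. With $g=3$, the parameter $g-2=1$, and $\mathcal{T}_{G,v;1}$ is the collection of trees rooted at $v$ that contain at least one vertex at distance at least $1$ from $v$; this is exactly the set of non-empty rooted trees at $v$. Consequently, writing $x_0=b(1+b/(\Delta-1))^{-(\Delta-1)}/(\Delta-1)$, we have $T_{G,v;1}(x_0)=T_{G,v}(x_0)-1$, where the $-1$ subtracts the contribution of the empty tree. Now Lemma~\ref{lem:tree} (applicable because any vertex counted in $t_{\Delta,3,i}(b)$ has degree $\deg(v)\le i\le \Delta-2\le \Delta-1$) gives $T_{G,v}(x_0)\le (1+b/(\Delta-1))^{\deg(v)}\le (1+b/(\Delta-1))^i$, and hence
\[
t_{\Delta,3,i}(b)\;\le\;\Bigl(1+\tfrac{b}{\Delta-1}\Bigr)^i-1.
\]

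The second step is to substitute this into the hypothesis of Theorem~\ref{thm:main general}. Under the substitution, $1-a+t_{\Delta,3,i}(b)\le (1+b/(\Delta-1))^i-a$. Because the linear function $y\mapsto 1-\tfrac{b\,y}{(\Delta-1)(1+b/(\Delta-1))^{\Delta-1}}$ is strictly decreasing, replacing $1-a+t_{\Delta,3,i}(b)$ by the larger quantity $(1+b/(\Delta-1))^i-a$ in each factor of the product can only make the product smaller. Therefore the hypothesis
\[
\prod_{i=0}^{\Delta-2} \Bigl(1-\tfrac{b((1+b/(\Delta-1))^i-a)}{(\Delta-1)(1+b/(\Delta-1))^{\Delta-1}}\Bigr)\ge 1-a
\]
of the proposition implies the hypothesis of Theorem~\ref{thm:main general}, so that theorem's conclusion gives the claimed zero-free disk for $F_G(z)$.

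There is no real obstacle here; the work is almost entirely bookkeeping. The one point to handle carefully is the identification $T_{G,v;1}(x)=T_{G,v}(x)-1$, which is just the observation that the length-$\ge 1$ condition merely excludes the empty tree. The role of the side condition $b\in(0,\tfrac{\Delta-1}{\Delta-2}]$ is not used in the logical deduction of the proposition from Theorem~\ref{thm:main general}; it merely records the parameter range in which the product hypothesis can be satisfied with factors that stay positive, as witnessed by the parameter choices listed in Table~\ref{tab:main general}.
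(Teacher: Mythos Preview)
Your proposal is correct and follows exactly the paper's approach: the paper states Proposition~\ref{prop:bound general} as an immediate corollary of Theorem~\ref{thm:main general}, noting only that Lemma~\ref{lem:tree} gives $t_{\Delta,g,i}(b)\le (1+\tfrac{b}{\Delta-1})^i-1$. Your write-up just unpacks this one-line remark, making explicit the identification $T_{G,v;1}=T_{G,v}-1$ and the monotonicity step needed to pass from the proposition's hypothesis to that of Theorem~\ref{thm:main general}.
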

\begin{proof}
Since any graph has girth at least $3$, we can invoke Theorem~\ref{thm:main general} with $g=3$.
It follows from Lemma~\ref{lem:tree} that $t_{\Delta,3,i}(b)\leq (1+\tfrac{b}{\Delta-1})^i-1$ for any $i=0,\ldots, \Delta-1$. Indeed, as the trees appearing in the generating function $T_{G,v;1}$ in~\eqref{eq:define tree upper bound} have at least one edge, we can subtract the constant term, which is $1$, from the value of the tree-generating function.
This implies that for each $i=0,\ldots,\Delta-1$, 
\[
a_i\leq \frac{b\left((1+\tfrac{b}{\Delta-1})^i-a\right)}{(\Delta-1) \left(1+\tfrac{b}{\Delta-1}\right)^{\Delta-1}}=:\hat{a}_i,
\]
where we recall that the $a_i$ are defined in the statement of Theorem~\ref{thm:main general}.
By Theorem~\ref{thm:main general} it thus suffices to show that $\hat{a}_i<1$, as by assumption we have $\prod_{i=0}^{\Delta-2}(1-a_i)\geq \prod_{i=0}^{\Delta-2} (1-\hat{a}_i)\geq 1-a$.
We have for each $i$, 
\[\hat{a}_i\leq \hat{a}_{\Delta-1}<\frac{b}{\Delta-1}\leq 1.
\]
This finishes the proof.
\end{proof}

Using a computer, this proposition can be utilized to give bounds on the roots of the chromatic polynomial for graphs of small maximum degree, see Table~\ref{tab:main general}.
To prove a uniform bound, valid for all degrees, we require the following technical lemma.

\begin{lemma}\label{lem:bound log(1+x)}
Let $x\in [0,1/2]$. Then $1-x> \exp(-x-x^2).$
\end{lemma}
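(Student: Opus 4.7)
The plan is to take logarithms and reduce the claim to showing that
\[
h(x) := \log(1+x) - x + x^2 > 0 \quad \text{for all } x \in [-1/2, 0).
\]
Since $h(0) = 0$, it suffices to prove that $h$ is \emph{strictly decreasing} on $[-1/2, 0]$, as then $h(x) > h(0) = 0$ for all $x$ in the desired range. Exponentiating the inequality $\log(1+x) > x - x^2$ then yields the statement of the lemma.

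To verify the monotonicity, I would differentiate and factor:
\begin{equation*}
    h'(x) = \frac{1}{1+x} - 1 + 2x = \frac{1 - (1+x) + 2x(1+x)}{1+x} = \frac{x(1+2x)}{1+x}.
\end{equation*}
For $x \in (-1/2, 0)$, the three factors have unambiguous signs: $x < 0$, $1 + 2x > 0$, and $1 + x > 0$. Hence $h'(x) < 0$ on $(-1/2, 0)$, so $h$ is strictly decreasing on $[-1/2, 0]$ as required.

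There is essentially no obstacle here; the only mild subtlety is finding a factorization of $h'$ whose sign can be read off immediately. If the factorization route felt unappealing, an equally short alternative would be a Taylor-series argument: substitute $x = -y$ with $y \in (0, 1/2]$, expand $\log(1-y) = -y - \sum_{k\ge 2} y^k/k$, and show that
\[
h(-y) = \tfrac{y^2}{2} - \sum_{k \ge 3} \tfrac{y^k}{k} \;\ge\; \tfrac{y^2}{2} - \tfrac{y^3}{3(1-y)} \;=\; \tfrac{y^2(3-5y)}{6(1-y)} > 0
\]
for $y \le 1/2$, using the crude bound $\sum_{k\ge 3} y^k/k \le y^3/(3(1-y))$. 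Either route disposes of the lemma in a few lines.
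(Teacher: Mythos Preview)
Your proof is correct. Your primary route (show $h(x)=\log(1+x)-x+x^2$ is strictly decreasing on $[-1/2,0]$ by factoring $h'(x)=x(1+2x)/(1+x)$ and reading off signs) is not what the paper does: the paper instead runs the Taylor-series argument you sketch as your alternative. Specifically, the paper bounds the tail $\sum_{k\ge 3}(-1)^{k-1}x^k/k$ below by $x^3/(3(1+x))$ and then uses $x\ge -1/2$ to conclude $\log(1+x)\ge x-x^2(\tfrac12+\tfrac13)>x-x^2$, which is exactly your second approach rewritten in terms of $x$ rather than $y=-x$. Your derivative argument is arguably cleaner---it avoids any series manipulation and the endpoint $x=-1/2$ falls out automatically---while the paper's series approach makes the slack explicit (one sees the margin is roughly $x^2/6$). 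Either way, there is nothing to fix.
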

\begin{proof}
We have
\begin{align*}
\log(1-x)&=-\left(x+\frac{x^2}{2}+\frac{x^3}{3}+\frac{x^4}{4}+\cdots\right)>-\left(x+\frac{x^2}{2}+\frac{1}{3}(x^3+x^4+\ldots)\right )
\\
&=-\left(x+\frac{x^2}{2}+\frac{x^3}{3(1+x)}\right )>-(x+x^2),
\end{align*}
using that $\tfrac{x}{3(1+x)}< 1/2$ since $x\leq 1/2.$
Taking $\exp$ on both sides we arrive at the statement of the lemma.
\end{proof}

\begin{proposition}\label{prop:main general}
Let $\Delta\geq 2$. 
Suppose $a,b\in (0,1)$ are such that $a<1-\tfrac{1}{\Delta}$ and
\[
e^{1-(ab+1)e^{-b}}\leq \tfrac{1}{1-a}.
\]
Then, if $z\in \mathbb{C}$ is such that $|z|\leq \frac{b(1-a)}{e^b\Delta}$, then $F_G(z) \neq 0$ for any graph $G$ of maximum degree at most $\Delta$.    
\end{proposition}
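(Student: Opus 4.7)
My plan is to derive Proposition~\ref{prop:main general} from Proposition~\ref{prop:bound general}. Write $r=b/(\Delta-1)$, $M=(1+r)^{\Delta-1}$, and $y_i=r((1+r)^i-a)/M$ for $i=0,\dots,\Delta-2$, so that the product in Proposition~\ref{prop:bound general} is $\prod_{i=0}^{\Delta-2}(1-y_i)$ and its zero-free disk has radius $b(1-a)/((\Delta-1)M)$. The standard bound $(1+r)^{\Delta-1}\le e^b$ combined with $\Delta-1\le\Delta$ shows $b(1-a)/(e^b\Delta)\le b(1-a)/((\Delta-1)M)$, so the disk in the conclusion of Proposition~\ref{prop:main general} sits inside the zero-free disk of Proposition~\ref{prop:bound general}. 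The technical constraint $b\le(\Delta-1)/(\Delta-2)$ of that proposition holds automatically since $b<1$ and $\Delta\ge3$. What remains is to check that the hypothesis $e^{1-(ab+1)e^{-b}}\le (1-a)^{-1}$ implies the product inequality $\prod(1-y_i)\ge 1-a$.

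Because $b<1$ and $\Delta\ge 3$ force $y_i\le r/(1+r)=b/(\Delta-1+b)<1/2$, Lemma~\ref{lem:bound log(1+x)} applied with $x=-y_i$ gives $\log(1-y_i)\ge -y_i-y_i^2$, hence
\[
\prod_{i=0}^{\Delta-2}(1-y_i)\;\ge\;\exp\Bigl(-\sum_{i=0}^{\Delta-2} y_i-\sum_{i=0}^{\Delta-2} y_i^2\Bigr).
\]
The geometric identity $\sum_{i=0}^{\Delta-2}(1+r)^i=(M-1)/r$ together with $r(\Delta-1)=b$ yields $\sum_{i=0}^{\Delta-2} y_i=1-(1+ab)/M$. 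The hypothesis can be rewritten as $-\log(1-a)\ge 1-(ab+1)e^{-b}$, so using $M\le e^b$ the target inequality $\sum(y_i+y_i^2)\le -\log(1-a)$ reduces to
\[
\sum_{i=0}^{\Delta-2} y_i^2 \;\le\; (1+ab)\bigl(M^{-1}-e^{-b}\bigr),
\]
which says the second-order correction from Lemma~\ref{lem:bound log(1+x)} is dominated by the slack $e^b-M\ge 0$ gained in bounding $\sum y_i$.

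The main obstacle is this last inequality. I would establish it by expanding $\sum y_i^2$ as a combination of three geometric sums in $(1+r)$, producing an explicit rational expression in $r$, $M$, $a$, and $b$, and then bounding $M^{-1}-e^{-b}$ from below via the two-term Taylor estimate $\log(1+r)\ge r-r^2/2$ (equivalently $M\le e^{b-rb/2+\cdots}$) together with $e^x\ge 1+x$. The extra hypothesis $a<1-1/\Delta$ should enter at the final algebraic comparison to control the $a$-linear and $a^2$-terms appearing in the expanded form of $\sum y_i^2$.
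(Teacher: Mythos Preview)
Your reduction to Proposition~\ref{prop:bound general}, the use of Lemma~\ref{lem:bound log(1+x)}, and the evaluation $\sum_i y_i=1-(1+ab)/M$ are all correct, but the ``main obstacle'' you isolate is genuinely an obstacle: the inequality
\[
\sum_{i=0}^{\Delta-2} y_i^2 \;\le\; (1+ab)\bigl(M^{-1}-e^{-b}\bigr)
\]
is false in the regime you need. For large $\Delta$ one has $r=b/(\Delta-1)\to 0$, $M\to e^b$, and a direct computation gives
\[
\sum_{i=0}^{\Delta-2} y_i^2 \sim \frac{b\,(1-e^{-2b})}{2(\Delta-1)},\qquad
(1+ab)\bigl(M^{-1}-e^{-b}\bigr)\sim \frac{(1+ab)\,b^2 e^{-b}}{2(\Delta-1)},
\]
so the required inequality becomes $2\sinh b\le (1+ab)\,b$, which fails for every $b\in(0,1)$ (indeed $2\sinh b>2b>(1+ab)b$). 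The slack $M^{-1}-e^{-b}$ is of order $1/\Delta$, but so is $\sum y_i^2$, and the constants go the wrong way; no amount of expanding geometric sums will rescue this.

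The paper sidesteps the issue by not trying to absorb $\sum y_i^2$ into the slack $M^{-1}-e^{-b}$. Instead it applies Proposition~\ref{prop:bound general} with the shifted parameter $a_\Delta:=a+(1-a)/\Delta$ in place of $a$, so that $1-a_\Delta=(1-a)(\Delta-1)/\Delta$. With this choice the radius condition $|z|\le b(1-a)/(e^b\Delta)$ already matches $b(1-a_\Delta)/((\Delta-1)M)$, and the product condition to verify is $\prod_i(1-a_i)\ge 1-a_\Delta$ rather than $\ge 1-a$. The crude estimate $a_i\le 1/\Delta$ then gives $\sum a_i^2\le\frac{1}{\Delta}\sum a_i$, whence $\prod(1-a_i)\ge\exp\bigl(-(1+\tfrac{1}{\Delta})\sum a_i\bigr)$; the extra factor $e^{-(1/\Delta)\sum a_i}\ge e^{-1/\Delta}>1-1/\Delta$ is exactly what converts the hypothesis $e^{1-(ab+1)e^{-b}}\le(1-a)^{-1}$ into the desired bound $1-a_\Delta$. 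The role of the condition $a<1-1/\Delta$ is just to keep $a_\Delta\in(0,1)$. So the missing idea in your argument is this passage to $a_\Delta$, which trades the unmanageable comparison of $\sum y_i^2$ against $M^{-1}-e^{-b}$ for the trivial bound $\sum a_i^2\le\frac{1}{\Delta}\sum a_i$.
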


\begin{proof}
Let us define $\hat{a}=a+\tfrac{1-a}{\Delta}$. 
We apply Proposition~\ref{prop:bound general} with $\hat{a}$ and $b$.
Let us denote 
for $i=0,\ldots,\Delta-2$, 
\[
a_i:=\frac{b}{(\Delta-1) \left(1+\tfrac{b}{\Delta-1}\right)^{\Delta-1}}\left((1+\tfrac{b}{\Delta-1})^{i}-\hat{a}\right).
\]
It is our goal to show that 
\begin{equation}\label{eq:goal}
\prod_{i=0}^{\Delta-2}(1-a_i)\geq 1-\hat{a}.
\end{equation}
Note that since $(1-a)\tfrac{\Delta-1}{\Delta}=1-\hat{a}$ and $\left(1+\tfrac{b}{\Delta-1}\right)^{\Delta-1}\leq e^b$, our condition on $z$ implies that 
\begin{align*}
|z|<\frac{b(1-a_\Delta)}{(1+\tfrac{b}{\Delta-1})^{\Delta-1}(\Delta-1)}
\end{align*}
and therefore combined with Proposition~\ref{prop:bound general},~\eqref{eq:goal} implies our claim.

To prove~\eqref{eq:goal} note that $a_i\leq a_{\Delta-2}\leq \tfrac{b}{\Delta-1+b}\leq \tfrac{1}{\Delta}$ for each $i$ and therefore $\sum_{i=0}^{\Delta-2}a_i^2\leq  \tfrac{1}{\Delta}\sum_{i=0}^{\Delta-2}a_i$.
Moreover,
\begin{align*}
    \sum_{i=0}^{\Delta-2}a_i&=\frac{b}{(\Delta-1)\left(1+\tfrac{b}{\Delta-1}\right)^{\Delta-1}} \left(\frac{\left(1+\tfrac{b}{\Delta-1}\right)^{\Delta-1}-1}{\tfrac{b}{\Delta-1}}-(\Delta-1)\hat{a} \right)
    \\
    &=1-\frac{\hat{a} b+1}{\left(1+\tfrac{b}{\Delta-1}\right)^{\Delta-1}}.
    \end{align*}
Now applying Lemma~\ref{lem:bound log(1+x)} we have
\begin{align*}
    \prod_{i=0}^{\Delta-2}(1-a_i)&\geq \exp\left(-\sum_{i=0}^{\Delta-2}a_i -\sum_{i=0}^{\Delta-2}a_i^2\right)\geq \exp\left(-(1+\tfrac{1}{\Delta})\sum_{i=0}^{\Delta-2}a_i\right)
    \\
    &> \exp\left (-1+\frac{\hat{a} b+1}{\left(1+\tfrac{b}{\Delta-1}\right)^{\Delta-1}}-\frac{1}{\Delta}\right)
    \\
    &>\exp\left (-1+\frac{ab+1}{\left(1+\tfrac{b}{\Delta-1}\right)^{\Delta-1}}-\frac{1}{\Delta}\right)
    \\
    &>\exp\left (-1+\frac{ab+1}{e^b}\right)\left(1-\frac{1}{\Delta}\right)\geq (1-a)\frac{\Delta-1}{\Delta}=1-\hat{a},
\end{align*}
where the final inequality is by the assumption on $a$ and $b$. This finishes the proof.
\end{proof}

We can now provide a proof of Theorem~\ref{thm:main}.
\begin{proof}[Proof of Theorem~\ref{thm:main}]
Let $G$ be a graph. 
We may assume $\Delta=\Delta(G)\geq 2$, since the statement is clearly true for graphs of maximum degree at most $1$. 
Let us choose $a=0.333$ and $b=0.739$. Then it can be verified that 
\[
(1-a)e^{1-(ab+1)e^{-b}}\leq 1.
\]
Consequently, Proposition~\ref{prop:main general} implies that if 
\[
|z|\leq \frac{1}{4.25\Delta}\leq \frac{(1-a)b}{e^b\Delta},
\]
we have $F_G(z)\neq 0$.
By Theorem~\ref{thm:whitney} it thus follows that $\chi_G(q)\neq 0$ provided $|q|\ge 4.25\Delta$.
\end{proof}
\subsection{Bounds for large girth graphs}
We next derive some consequence of Theorem~\ref{thm:main general} for large girth graphs allowing us to prove concrete bounds on their chromatic zeros.

\begin{proposition}\label{prop:main general large girth}
    Let $\Delta\ge 3$ be an integer. Suppose that $a\in (0,1)$ and $b\in(0,\tfrac{\Delta-1}{\Delta-2}]$ are such that  
    \[
       \left(1-\frac{b(1-a)}{(\Delta-1)(1+\tfrac{b}{\Delta-1})^{\Delta-1}}\right)^{\Delta-1} \ge  1-a.
    \]
Then for any $\varepsilon\in (0,1)$ there exists $g=g(\Delta,\varepsilon)$ such that if $z\in\mathbb{C}$ satisfies \[|z|\le (1-\varepsilon)\frac{(1-a)b}{(1+\tfrac{b}{\Delta-1})^{\Delta-1}(\Delta-1)},\] then $F_G(z)\neq 0$ for any graph $G$ of maximum degree at most $\Delta$  and girth at least $g$.
\end{proposition}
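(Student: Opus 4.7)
The plan is to apply Theorem~\ref{thm:main general} with an appropriately perturbed pair $(a',b)$ in place of $(a,b)$, exploiting the fact that for $b$ strictly below $\tfrac{\Delta-1}{\Delta-2}$ Lemma~\ref{lem:large_rooted_tree} forces $t_{\Delta,g,i}(b)\to 0$ as $g\to\infty$. The girth hypothesis is precisely what allows the ``correction term'' $t_{\Delta,g,i}(b)$ appearing in the hypothesis of Theorem~\ref{thm:main general} to be made arbitrarily small, so that this hypothesis reduces in the limit to the one assumed here.

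Abbreviate $R=(\Delta-1)(1+\tfrac{b}{\Delta-1})^{\Delta-1}$ and $r_0=b(1-a)/R$. Given $\varepsilon\in(0,1)$, I would pick $a':=a+\tfrac{\varepsilon}{2}(1-a)\in(a,1)$, so that $1-a'=(1-\tfrac{\varepsilon}{2})(1-a)$. Since $a'>a$, the hypothesis $(1-\tfrac{b(1-a)}{R})^{\Delta-1}\ge 1-a$ yields
\[
\left(1-\tfrac{b(1-a')}{R}\right)^{\Delta-1}>\left(1-\tfrac{b(1-a)}{R}\right)^{\Delta-1}\ge 1-a>1-a',
\]
so the inequality becomes \emph{strict} at $(a',b)$. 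By continuity in the nonnegative parameters $t_0,\dots,t_{\Delta-2}$ there exists $\tau=\tau(\Delta,\varepsilon,a,b)>0$ such that whenever $0\le t_i\le\tau$ for all $i$,
\[
\prod_{i=0}^{\Delta-2}\left(1-\tfrac{b(1-a'+t_i)}{R}\right)\ge 1-a'.
\]

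Assume first $b<\tfrac{\Delta-1}{\Delta-2}$, so that $\lambda:=b/(1+\tfrac{b}{\Delta-1})<1$. Lemma~\ref{lem:large_rooted_tree} then gives $t_{\Delta,g,i}(b)\le e^b\lambda^{g-2}\to 0$ as $g\to\infty$, so one may choose $g=g(\Delta,\varepsilon)$ large enough that $e^b\lambda^{g-2}\le\tau$. With this choice, the hypothesis of Theorem~\ref{thm:main general} is satisfied for $(a',b,\Delta,g)$, and the theorem produces $F_G(z)\ne 0$ for every graph $G$ of maximum degree at most $\Delta$ and girth at least $g$ whenever $|z|\le b(1-a')/R=(1-\tfrac{\varepsilon}{2})r_0$, which in particular contains the disk of radius $(1-\varepsilon)r_0$. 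The remaining boundary case $b=\tfrac{\Delta-1}{\Delta-2}$ (possible only for $\Delta\ge 3$) is handled by also perturbing $b$ to $b'=b-\eta$ with $\eta>0$ small enough that the strict inequality at $(a',b)$ is preserved at $(a',b')$ by continuity and the resulting radius $b'(1-a')/R'$ still exceeds $(1-\varepsilon)r_0$; the previous argument then applies to $(a',b')$. The one point requiring care is the quantitative continuity bookkeeping that converts the strict inequality at $(a',b)$ into a uniform slack large enough to absorb the $t_{\Delta,g,i}(b)$ terms uniformly in $i$; the rest is a direct invocation of Theorem~\ref{thm:main general}.
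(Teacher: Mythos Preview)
Your proof is correct and follows essentially the same approach as the paper: perturb the parameters to turn the non-strict hypothesis into a strict one, use Lemma~\ref{lem:large_rooted_tree} to make the correction terms $t_{\Delta,g,i}$ small enough to be absorbed by the slack, then invoke Theorem~\ref{thm:main general}.

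The execution differs in a minor but instructive way. You perturb only $a$ and rely on an abstract continuity argument to absorb the $t_i$, which forces you to treat the boundary case $b=\tfrac{\Delta-1}{\Delta-2}$ separately (since there $\lambda=b/(1+\tfrac{b}{\Delta-1})=1$ and Lemma~\ref{lem:large_rooted_tree} gives no decay). The paper instead perturbs \emph{both} parameters at once, splitting the factor $(1-\varepsilon)$ as $(1-\varepsilon)^{1/2}\cdot(1-\varepsilon)^{1/2}$: one square root goes into $a_\varepsilon=1-(1-\varepsilon)^{1/2}(1-a)$ and the other into choosing $b'<b$ with $h(b')=(1-\varepsilon)^{1/2}h(b)$, where $h(b)=b(1+\tfrac{b}{\Delta-1})^{-(\Delta-1)}$. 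This simultaneously (i) puts $b'$ strictly inside the interval so that Lemma~\ref{lem:large_rooted_tree} applies uniformly, (ii) creates the explicit slack $t_{\Delta,g,i}(b')\le a_\varepsilon-a$ needed in Theorem~\ref{thm:main general}, and (iii) makes the resulting radius come out exactly as $(1-\varepsilon)r_0$ rather than merely at least that. The paper's version thus avoids the case split and the soft continuity step, at the price of a slightly less transparent choice of perturbation.
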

\begin{proof}
First let us make an observation. The function $h(b)=b(1+\tfrac{b}{\Delta-1})^{-(\Delta-1)}$ is a strictly monotone increasing function on $[0,\tfrac{\Delta-1}{\Delta-2})$, since $h'(b)$ is positive for any $0\le b<\tfrac{\Delta-1}{\Delta-2}$. This means we can choose a $0<b'<b$ such that $h(b')=h(b)(1-\varepsilon)^{1/2}$.

Next let us define $a_\varepsilon=1-(1-\varepsilon)^{1/2}(1-a)$. 
We will now verify the condition of Theorem~\ref{thm:main general} for $a_\varepsilon$ and $b'$.

Let us write $f(b)=1+\tfrac{b}{\Delta-1}$ and observe that since $b'<b\leq \tfrac{\Delta-1}{\Delta-2}$, we have $\tfrac{b'}{f(b')}<1$.
Consequently, for $g$ large enough, we have $e^{b'}\left(\tfrac{b'}{f(b')}\right)^{g-2}\leq a_\varepsilon-a$. 
Therefore, we have by Lemma~\ref{lem:large_rooted_tree} that $t_{\Delta,g,i}(b')\leq a_\varepsilon-a$ for all $i=0,\ldots,\Delta-2$.
This implies
\begin{align*}
    \prod_{i=0}^{\Delta-2}\left(1-\frac{b'\left(1-a_\varepsilon+t_{\Delta,g,i}(b')\right)}{(\Delta-1)(1+\tfrac{b'}{\Delta-1})^{\Delta-1}}\right)&\geq \left(1-\frac{b'(1-a)}{(\Delta-1)(1+\tfrac{b'}{\Delta-1})^{\Delta-1}} \right)^{\Delta-1}
    \\
    &=\left(1-\frac{1-a}{\Delta-1}h(b')\right)^{\Delta-1}>\left(1-\frac{1-a}{\Delta-1}h(b)\right)^{\Delta-1}
    \\
    &=\left(1-\frac{b(1-a)}{(\Delta-1)(1+\tfrac{b}{\Delta-1})^{\Delta-1}}\right)^{\Delta-1}
    \\
    &\geq 1-a>1-a_\varepsilon.
\end{align*}
The result now follows from Theorem~\ref{thm:main general}, as
\[
|z|\le\frac{b'(1-a_\varepsilon)}{(1+\frac{b'}{\Delta-1})^{\Delta-1}(\Delta-1)}=(1-\varepsilon)\frac{b(1-a)}{(1+\frac{b}{\Delta-1})^{\Delta-1}(\Delta-1)}.
\]



\end{proof}

The next proposition gives an easier condition to check.

\begin{proposition}
    Let $\Delta\ge 3$ be an integer. Suppose that $a\in (0,1)$ and $b\in(0,1]$ are such that
    \[
        e^{-b(1-a)e^{-b}}\ge 1-a.
    \]
    Then for any $\varepsilon\in(0,1)$ there exists $g=g(\Delta,\varepsilon)>0$ such that if $z\in\mathbb{C}$ is such that $|z|\le (1-\varepsilon)\frac{(1-a)b}{\Delta e^b}$, then $F_G(z)\neq 0$ for any graph $G$ of maximum degree at most $\Delta$ and girth at least $g$.
\end{proposition}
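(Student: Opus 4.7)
The plan is to reduce the statement to Proposition~\ref{prop:main general large girth} by a suitable change of the parameter $b$. Set $v:=(1-a)be^{-b}$; the hypothesis then rewrites as $e^{-v}\geq 1-a$, and we note that $v\leq 1/e$ since $be^{-b}\leq 1/e$ on $(0,1]$.

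Introduce $h(x):=x(1+x/(\Delta-1))^{-(\Delta-1)}$ on the interval $[0,(\Delta-1)/(\Delta-2)]$. A direct derivative computation gives
\[
h'(x)=\left[1-\frac{(\Delta-2)x}{\Delta-1}\right]\left(1+\frac{x}{\Delta-1}\right)^{-\Delta},
\]
which is non-negative on this interval, so $h$ is strictly increasing there and attains its maximum $((\Delta-2)/(\Delta-1))^{\Delta-2}$ at the right endpoint. A short Taylor-type estimate (using $\log(1-1/(\Delta-1))\geq -1/(\Delta-2)$, see Lemma~\ref{lem:bound log(1+x)}) shows this maximum is at least $1/e$ for $\Delta\geq 3$. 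Since $be^{-b}(\Delta-1)/\Delta\leq 1/e$, there is a unique $b'\in(0,(\Delta-1)/(\Delta-2)]$ with $h(b')=be^{-b}(\Delta-1)/\Delta$. The plan is to apply Proposition~\ref{prop:main general large girth} with this $b'$ in place of $b$ and with the same $a$.

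By the defining relation for $b'$, one has $h(b')(1-a)/(\Delta-1)=v/\Delta$, so the hypothesis of Proposition~\ref{prop:main general large girth} reduces to the inequality $(1-v/\Delta)^{\Delta-1}\geq 1-a$. The crux is the elementary inequality
\[
(1-v/\Delta)^{\Delta-1}\geq e^{-v}\quad\text{for } v\in[0,1]\ \text{and}\ \Delta\geq 2,
\]
which combined with the hypothesis $e^{-v}\geq 1-a$ delivers what we need. To prove the elementary inequality I will set $f(v):=(\Delta-1)\log(1-v/\Delta)+v$ and compute $f(0)=0$ together with $f'(v)=(1-v)/(\Delta-v)$, which is non-negative on $[0,1]$, so $f\geq 0$ on $[0,1]$.

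Proposition~\ref{prop:main general large girth} then yields, for each $\varepsilon\in(0,1)$, a girth threshold $g=g(\Delta,\varepsilon)$ such that $F_G(z)\neq 0$ whenever $|z|\leq (1-\varepsilon)(1-a)h(b')/(\Delta-1)$; by the choice of $b'$ this upper bound equals $(1-\varepsilon)(1-a)be^{-b}/\Delta$, which is precisely the required conclusion. The main conceptual obstacle is identifying the correct substitution $b\mapsto b'$: it is chosen so that the denominators $\Delta e^b$ in the target bound and $(\Delta-1)(1+b/(\Delta-1))^{\Delta-1}$ naturally produced by Proposition~\ref{prop:main general large girth} become aligned; once this substitution is in place, verifying the hypothesis of Proposition~\ref{prop:main general large girth} reduces to the clean monotonicity argument above.
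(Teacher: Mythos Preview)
Your proof is correct but takes a route dual to the paper's. The paper applies Proposition~\ref{prop:main general large girth} with the \emph{same} $b$ but a modified parameter $a_\Delta := 1 - (1-a)\tfrac{(\Delta-1)(1+b/(\Delta-1))^{\Delta-1}}{\Delta e^b}$, and then verifies the required inequality via Lemma~\ref{lem:bound log(1+x)} and a short chain of estimates that has to absorb an extra factor $(\Delta-1)(\Delta+1)/\Delta^2$. You instead keep $a$ fixed and change $b$ to $b'$ so that $h(b')/(\Delta-1)=be^{-b}/\Delta$; both substitutions produce exactly the same alignment of radii, namely $(1-a)h(b')/(\Delta-1)=(1-a_\Delta)h(b)/(\Delta-1)=(1-a)be^{-b}/\Delta$. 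Your verification of the hypothesis of Proposition~\ref{prop:main general large girth} through the elementary inequality $(1-v/\Delta)^{\Delta-1}\ge e^{-v}$ for $v\in[0,1]$, proved by the monotonicity of $f(v)=(\Delta-1)\log(1-v/\Delta)+v$, is arguably cleaner than the paper's estimate. One small remark: your appeal to Lemma~\ref{lem:bound log(1+x)} for $\log(1-1/(\Delta-1))\ge -1/(\Delta-2)$ is indirect (the lemma gives $\log(1+x)>x-x^2$, from which your claim follows after checking $1/(\Delta-1)+1/(\Delta-1)^2\le 1/(\Delta-2)$); alternatively this is just the standard fact $(1-1/n)^{n-1}\ge 1/e$.
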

\begin{proof}
Let us define $a_\Delta=1-(1-a)\frac{(\Delta-1)(1+\tfrac{b}{\Delta-1})^{\Delta-1}}{\Delta e^b}$. Observe that $1>a_\Delta>a$. Now we will verify the condition of the previous proposition.
We have
\begin{align*}
    \left(1-\tfrac{b(1-a_\Delta)}{(\Delta-1)(1+\tfrac{b}{\Delta-1})^{\Delta-1}}\right)^{\Delta-1}&= (1-\tfrac{b(1-a)}{\Delta e^b})^{\Delta-1}
    \\
    &\ge \exp\left(\left(-\tfrac{b(1-a)}{\Delta e^b}-\left(\tfrac{b(1-a)}{\Delta e^b}\right)^2\right)(\Delta-1)\right)
    \\
    &\ge \exp\left(-\tfrac{(\Delta-1)(\Delta+1)}{\Delta^2}\tfrac{b(1-a)}{e^b}\right)\\
    &\ge \exp\left(-\tfrac{b(1-a)}{e^b}\right)\\
    &= 1-a\ge 1-a_\Delta,
\end{align*}
where the second inequality follows from Lemma~\ref{lem:bound log(1+x)}.
\end{proof}

We can now proof Theorem~\ref{thm:main large girth}.
\begin{proof}[Proof of Theorem~\ref{thm:main large girth}]
Let us apply the previous proposition with $b=1$ and $a=1-e W(\tfrac{1}{e})\approx 0.243$, where $W(z)$ is the Lambert W-function. Then it can be easily verified that
\[
    e^{-(1-a)be^{-b}}=e^{-W(1/e)}=\frac{W(1/e)}{1/e}=1-a.
\]
Therefore, by the previous proposition and Theorem~\ref{thm:whitney}, for any integer $\Delta\geq 3$ if
\[
    |q|>\frac{e^b\Delta}{(1-a)b}=\frac{\Delta}{W(1/e)},
\]
there exists $g>0$ such that $\chi_G(q)\neq 0$ for all graphs $G$ of maximum degree at most $\Delta$ and girth at least $g$. 
This implies that 
$K_{\Delta,\infty}\leq \tfrac{1}{W(1/e)}$ for each $\Delta\geq 3$.
\end{proof}

\section{Claw-free graphs}\label{sec:claw-free}
In this section we will state and proof our main result for claw-free graphs from which we will derive the bound stated as Theorem~\ref{thm:main CF} in the introduction.
We will use the interpretation of $F_G(x)$ as the generating function of the $\star$-forests as in Lemma~\ref{lem:star forests}.

Let $G=(V,E)$ be graph with a fixed ordering of its edges and its vertices.
Recall that $\mathcal{F}^\star_G$ is the set of all $\star$-forests of $G$.
For a set of vertices $U \subseteq V$, we write $\mathcal{F}^\star_{G,U}$ (resp. $\mathcal{F}_{G,U}$) for those $F \in \mathcal{F}^\star_G$ (resp. $F \in \mathcal{F}_G$) such that every non-trivial component of $(V,F)$ (i.e. a component that is not an isolated vertex) contains at least one vertex of $U$. 
We note that both $\mathcal{F}^\star_{G,U}$ and $\mathcal{F}_{G,U}$ contain the empty set of edges. 
So, for example, if $U$ is just a single vertex $u$, then $\mathcal{F}^\star_{G,u}$ is the set of (edge sets of) $\star$-trees that contain $u$ along with the empty set of edges. 

The following identity is fundamental for our proof.
\begin{align}\label{eq:expand U}
F_{G}(x)=\sum_{F\in \mathcal{F}^\star_{G,U}}x^{|F|}F_{G\setminus V_U(F)}(x),
\end{align}
where by $V_U(F)$ we mean the union of $U$ together with the vertex sets of the non-trivial connected components of the graph $(V,F)$ that intersect $U$.
The identity~\eqref{eq:expand U} follows from collecting $\star$-forests according to their intersection with $U$; each such a forest splits uniquely into a forest $F$ whose non-trivial components intersect $U$ and a forest contained in $G\setminus V_U(F)$.
Specializing this identity to $U=\{v\}$, 
we obtain
\begin{align}\label{eq:fundamental recurrence}
F_{G}(x)=F_{G-v}(x)+\sum_{\substack{T\in \mathcal{F}^\star_{G,v} \\ |T|\geq 1}}x^{|T|}F_{G\setminus V(T)}(x),
\end{align}

The following inequality will be useful. 
For any positive number $y$ and $S\subseteq V$ we have
\begin{equation}
\label{eq:Forest-to-tree}
    \sum_{F \in \mathcal{F}^\star_{G,S}}y^{|F|} \leq \prod_{s \in S}S_{G,s}(y).
\end{equation}
This holds because we can group the forests $F$ in $\mathcal{F}^\star_{G,S}$ by the vertex sets of their non-trivial components  $C_1,\ldots,C_k$. 
Such a vertex set $C_i$ intersects $S$ in at least one vertex, let one of those be $s_i$.
By Remark~\ref{rem:star trees do not depend on ordering} the number of spanning $\star$-trees in $G[C_i]$ is equal to the number of spanning $\star$-trees, where $C_i$ is reordered such that $s_i$ is the smallest element.
Since each such a $\star$-tree is stable we can bound their number by the number of stable trees of size $|C_i|-1$ in $\mathcal{S}_{G,s_i}$.
It follows that \[
\sum_{F \in \mathcal{F}^\star_{G,S}}y^{|F|} \leq \prod_{s \in S} \sum_{T \in \mathcal{S}_{G,s}}y^{|T|}\leq \prod_{s \in S}S_{G,s}(y),\]
as desired.

In what follows, we denote for a graph $G$, $v\in V$ and $S\subseteq V$ by $vS$ the collection of edges of $G$ of the form $\{v,s\}$ with $s\in S$.
We start with the following useful lemma.
\begin{lemma}\label{lem:structure bad}
Let $G=(V,E)$ be a graph and let $v\in V$. 
Let $U=\{u_1,u_2\}\subseteq N(v)$ be an independent set.
Assume $G$ is equipped with an ordering of its vertices and edges in which $v$ is the smallest vertex, such that inside $G-v$, $u_1<u_2$ are the smallest vertices in the order and the edges are ordered lexicographically based on their vertices.
Let $F\in \mathcal{F}^\star_{G-v,U}$.
If $vU\cup F\notin \mathcal{F}^\star_{G,v}$, then $|F|\geq 1$ and if additionally $F$ consists of one non-trivial component, then this component contains $u_2$, but not $u_1$.
\end{lemma}
\begin{proof}
Denote $T=vU\cup F$ and assume that $T\notin \mathcal{F}^\star_{G,v}$. 
Denote for $k\geq 0$ by $V_k$ the collection of vertices of $T$ at distance exactly $k$ from $v$. 
There are three cases: (1) $T$ is not a tree, (2) $T$ is a tree but some $V_k$ is not independent in $G$, and (3) $T$ is a tree and all $V_k$ are independent sets of $G$.

In case (1), $T$ is not a tree and therefore it must contain a cycle $C$. Since $F$ is a forest the cycle must use the vertex $v$ and two vertices of $U$. Since $U$ is independent the cycle has length at least $4$ and therefore $C\setminus vU$ is a path of length at least $2$ and is contained in $F$ and hence $|F|\geq 2$. Moreover, $F$ consists of at least two non-trivial components.

In case (2), we have that $k\geq 2$ since $V_1\subseteq U$ is independent.
This means that some edge $e=\{x,y\}$ of $G$ is contained in $V_k$. Since $F\in \mathcal{F}^\star_{G-v,U}$ it follows that $F$ must have at least two components, one containing $x$ and one containing $y$.
The edge $e$ together with the two paths in $F$ from $U$ to $x$ and $y$ forms a path $P$ of length at least $3$. Therefore $|F|\geq 2$.

In case (3), we know that $T$ is a stable tree and we can in fact construct an acyclic orientation $\omega$ of $G[V(T)]$ from $T$ by orienting all edges away from the root $v$.
(Note that the orientation only depends on the sets $V_k$.)
The only obstruction for $T$ being an element of $\mathcal{F}^\star_{G,v}$ comes from the issue that the tree constructed from this orientation is not $T$, meaning that for some $k\geq 2$ there must be a $y\in V_k$ such that the unique edge between $V_{k-1}$ and $y\in V_k$ in $T$ (call it $\{x,y\} $) is not the smallest possible edge from $V_{k-1}$ to $y$. Adding this smallest edge to $T$ gives us a cycle $C$ containing $y$ of length at least $4$ since $V_{k-1}$ is independent, moreover it implies that $|V_{k-1}|\ge 2$.

It remains to show that if $F$ consists of a single non-trivial component, then $V(F)$ must contain $u_2$ but not $u_1$. Note that $F$ consisting of a single non-trivial component implies that $k=2$, since $F\in \mathcal{F}^\star_{G-v,U}$. Otherwise the problematic $e=\{x,y\}$ edge for $T$ would already be problematic for $F$. 
This implies that $u_1$ is indeed not contained in $V(F)$, otherwise we would have $u_1=x$, and $xy$ would be the smallest edge between $V_1$ and $V_2$.
\end{proof}

\begin{theorem}\label{thm:main claw-free}
Let $a\in (0,1)$ and $b>0$ be such that
 \begin{align*}
%
    h(a,b)&:=\tfrac{b(1-a)}{1+b+b^2/4}+\left(\tfrac{b(1-a)}{2+2b+b^2/2}\right)^2+\tfrac{b^2}{4}\left(1-\tfrac{1}{1+b+b^2/4}\right)<a
    \end{align*}
and let $f(a,b):=\tfrac{b(1-a)}{1+b+b^2/4}$.
Then for any $\Delta\in \mathbb{N}$ and $x\in \mathbb{C}$ such that $|x|\leq \tfrac{f(a,b)}{\Delta}$, $F_G(x)\neq 0$ for any claw-free graph $G$ of maximum degree at most $\Delta$.
\end{theorem}
\begin{proof}
Fix $\Delta\in \mathbb{N}$.
We will inductively prove the following two statements for any $x$ such that $|x|\leq \tfrac{f(a,b)}{\Delta}$ and any claw-free graph $G$ of maximum degree at most $\Delta$:
\begin{itemize}
    \item[(i)] \label{it:claw_free i} $F_{G}(x)\neq 0$, 
    \item[(ii)] \label{it:claw_free ii} for any vertex $v\in V$ we have $\left|\frac{F_{G}(x)}{F_{G-v}(x)}-1\right|\leq a$.
\end{itemize}

We note that the base case, $G$ being a one-vertex graph is trivial.
Next let $G=(V,E)$ be a claw-free graph with more than $1$ vertex and such that $\Delta(G)\leq \Delta$.
Note that it suffices to prove item (ii), since by induction $F_{G-v}(x)\neq 0$ and thus $F_G(x)=0$ would imply $\tfrac{F_{G}(x)}{F_{G-v}(x)}=-1$, violating that $a\in (0,1)$.
We thus focus on proving item (ii). To this end let us fix an ordering of the vertices of $G$ in which $v$ is the smallest and such that its neighbors are the smallest vertices in $G-v$; we order the edges lexicographically based on their vertices.

Writing $G'=G-v$ and using~\eqref{eq:fundamental recurrence} we see that we can write
\begin{align}
\frac{F_{G}(x)}{F_{G-v}(x)}-1&=\sum_{\substack{T\in \mathcal{F}^\star_{G,v}\\|T|\geq 1}}x^{|T|}\frac{F_{G'\setminus V(T)}(x)}{F_{G'}(x)}=\sum_{}\nonumber
\\
&=\sum_{\substack{\emptyset\neq U\subseteq N(v)\\ U \text{ ind.}}}x^{|U|}\sum_{\substack{F\in \mathcal{F}_{G-v,U}\\F\cup vU \in \mathcal{F}^\star_{G,v}}}x^{|F|} \frac{F_{G'\setminus V_U(F)}(x)}{F_{G'}(x)},\label{eq:U expansion}
\end{align}
where the summation is over independent sets $U$ contained in $N(v).$
Indeed, for any tree $T\in \mathcal{F}^\star_{G,v}$ the neighbors of $v$ in $T$ intersect $N(v)$ in an independent $U$ set and after removing $v$ from this intersection the resulting graph is a forest contained in $\mathcal{F}_{G',U}$. 

Since $G$ is claw-free we only need to consider independent sets $U$ of size $1$ and $2$ in~\eqref{eq:U expansion}.
Let us fix such an independent set $U$.

In case $U=\{u\}$, we reorder the vertices of $G'$ so that $u$ becomes the smallest vertex, while the relative ordering of the other vertices remains the same (the edges are ordered lexicographically) noting that this does not affect the value of $F_{G'}(x)$ by Remark~\ref{rem:star trees do not depend on ordering}.
With this ordering we claim that there is a bijection between the set $\{F\in \mathcal{F}_{G-v,u} ~|~F\cup vu \in \mathcal{F}^\star_{G,v}\}$ and $\mathcal{F}^\star_{G',u}$. 
Indeed, for each $F\in \mathcal{F}_{G-v,u}$ such that $F\cup e\in \mathcal{F}^\star_{G,v}$ we have  $F\in \mathcal{F}^\star_{G',u}$, as the acyclic orientation $\omega$ corresponding to $F\cup e$ (which has $v$ as its unique source) restricts to an acyclic orientation corresponding to $F$ with $u$ as its unique source as follows from the way the $\star$-trees are constructed in Section~\ref{ssec:acyclic}.
Conversely, any $F\in \mathcal{F}^\star_{G',u}$ satisfies that $F\cup e\in \mathcal{F}^\star_{G,v}$.
Therefore, 
\[
\sum_{\substack{F\in \mathcal{F}_{G-v,u}\\F\cup vu \in \mathcal{F}^\star_{G,v}}}x^{|F|} \frac{F_{G'\setminus V_u(F)}(x)}{F_{G'}(x)}=\sum_{F\in \mathcal{F}^\star_{G',u}}x^{|F|} \frac{F_{G'\setminus V_u(F)}(x)}{F_{G'}(x)}=\frac{F_{G'}(x)}{F_{G'}(x)}=1,
\]
where the final equality is due to~\eqref{eq:expand U}.

In case $U=\{u_1,u_2\}$, we reorder the vertices of $G'$ so that $u_1<u_2$ and these are the smallest vertices of $G'$, while the relative ordering of the other vertices remains the same (the edges are again ordered lexicographically). Then for any $F\in \mathcal{F}_{G-v,U}$ such that $F\cup vU\in \mathcal{F}^\star_{G,v}$ we have that $F\in \mathcal{F}^\star_{G',U}$. Indeed for the acyclic orientation $\omega$ corresponding to $F\cup vU$ we have that after removing $v$ the vertices in $U$ are its two only sources. 
It then follows from the construction of $F\cup vU$ from $\omega$ that $F$ is contained in $\mathcal{F}^\star_{G',U}$.    

By applying \eqref{eq:expand U} to $G'=G-v$ we obtain,
\begin{align*}
&\sum_{\substack{F\in \mathcal{F}_{G-v,U}\\F\cup vU \in \mathcal{F}^\star_{G,v}}}x^{|F|} \frac{F_{G'\setminus V_U(F)}(x)}{F_{G'}(x)}=1-\frac{F_{G'}(x)}{F_{G'}(x)}+\sum_{\substack{F\in \mathcal{F}^\star_{G-v,U}\\F\cup vU \in \mathcal{F}^\star_{G,v}}}x^{|F|} \frac{F_{G'\setminus V_U(F)}(x)}{F_{G'}(x)}
\\
=&1-\sum_{\substack{F\in \mathcal{F}^*_{G-v,U}\\F\cup vU \notin\mathcal{F}^\star_{G,v}}}x^{|F|}\frac{F_{G'\setminus V_U(F)}(x)}{F_{G'}(x)}.
\end{align*}
By Lemma~\ref{lem:structure bad} we know that the forests $F$ in the final summation above have at least one edge and in case it consists of a single component it cannot contain the smallest vertex of $U$. 
By induction (here we use that any induced subgraph of a claw-free graph is claw-free) and a telescoping argument (as in equation~\eqref{eq:telescope} in the proof of Theorem~\ref{thm:main general}) we have 
\begin{align}\label{eq:telescope_claw_free}
    \left|\frac{F_{G'\setminus V_U(F)}(x)}{F_{G'}(x)}\right|\leq \frac{1}{(1-a)^{|F|+|U|}},
\end{align}
since $F$ has at most $|U|$ components implying that $|V_U(F)|\leq |F|+|U|$.
Therefore we can bound $\left|\frac{F_{G}(x)}{F_{G-v}(x)}-1\right|$ as follows:
\begin{align*}
    \left|\frac{F_{G}(x)}{F_{G-v}(x)}-1\right|&\leq \sum_{\substack{\emptyset \neq U\subseteq N(v)\\ |U|\leq 2 \text{ ind.}}}|x|^{|U|}+\sum_{\substack{U\subseteq N(v)\\ |U|= 2, \text{ $U$ ind.}}} \left(\tfrac{|x|}{1-a}\right)^{|U|}\sum_{\substack{F\in \mathcal{F}^\star_{G',U}\\|F|\geq 1}}\left(\tfrac{|x|}{1-a}\right)^{|F|},  
 \end{align*}
where $F$ doesn't contain the smallest element of $U$ in case it consists of a single nontrivial component.
By~\eqref{eq:Forest-to-tree} we can bound this by
    \begin{align*}
    \sum_{\substack{\emptyset \neq U\subseteq N(v)\\ |U|\leq 2 \text{ ind.}}}|x|^{|U|}+\sum_{\substack{U\subseteq N(v)\\ |U|=2,  \text{ $U$ ind.}}} \left(\tfrac{|x|}{1-a}\right)^{|U|}\left(S_{G',\min(U)}\left(\tfrac{|x|}{1-a}\right)\cdot \left(S_{G',\max(U)}\left(\tfrac{|x|}{1-a}\right)-1\right)\right).  \label{eq:bound R part 1}
\end{align*}

We now use Lemma~\ref{lem:tree} to bound $S_{G',u}\left(\tfrac{|x|}{1-a}\right)$ by $q(b):=1+b+b^2/4$.
Since by Mantel's theorem $N(v)$ contains at most $\Delta^2/4$ independent sets of size $2$, we can thus bound the second part in the sum above by
\begin{align*}
\Delta^2/4 \left(\tfrac{b}{q(b)\Delta}\right)^2(q(b)^2-q(b))=\tfrac{b^2}{4}(1-1/q(b)).
\end{align*}
The first part can be bounded by 
\begin{align*}
\frac{b(1-a)}{q(b)}+\left(\frac{b(1-a)}{2q(b)}\right)^2.
\end{align*} 
By combining these two bounds, we see that $\left|\frac{F_{G}(x)}{F_{G-v}(x)}-1\right|$ is bounded by $h(a,b)$, which by assumption is bounded by $a$.
This finishes the proof.
\end{proof}

We can now prove the bound on the chromatic zeros for claw-free graphs as stated in the introduction.

\begin{proof}[Proof of Theorem~\ref{thm:main CF}]
We use Theorem~\ref{thm:main claw-free}.
Taking $b=0.865$ and $a=0.377$, it can be numerically verified that $h(a,b)<a$ and $\tfrac{1+b+b^2/4}{(1-a)b}\leq 3.81$.
Therefore for any claw-free graph $G$ and any $|x|\leq \tfrac{1}{3.81\Delta(G)}$ we have that $F_{G}(x)\neq 0$.
By Lemma~\ref{lem:acyclic to chromatic}, it follows that $\chi_G(q)\neq 0$, provided $q\geq 3.81\Delta(G)$.
\end{proof}

\section{Concluding remarks}\label{sec:remarks}
We note that even though the proof of Theorems~\ref{thm:main} and~\ref{thm:main large girth} and that of Theorem~\ref{thm:main CF} are somewhat different in nature, they both rely on the boundedness of an associated rooted tree-generating function.
The same is true for previous results on the bounds of chromatic zeros~\cites{sokal,Fernandez,JPR24}.
One of the technical motivations for studying claw-free graphs is that the rooted stable tree-generating function has a stronger bound, cf. Lemma~\ref{lem:tree}.
Note that as mentioned in~\cite{JPR24} the bounds on the ordinary tree-generating function from Lemma~\ref{lem:tree} are essentially tight as is witnessed by the infinite regular tree.
This stronger bound for the rooted stable tree-generating function allowed us to prove better bound for the chromatic zeros of claw-free graphs than that of all graphs.
Two remarks are in order here. 

First of all, it is hard to compare the bounds from Theorems~
\ref{thm:main} and~\ref{thm:main CF} since they have been obtained by different methods. One proof is based on induction on the number of edges while the other on the number of vertices. It is not clear whether the edge-based proof can be applied to claw-free graphs. First of all, claw-free graphs are not closed under removing edges.  Secondly, the interpretation of the coefficients of $F_G(x)$ in terms of $\star$-forests does not appear to behave nicely under edge-removal.
On the other hand, the vertex-based proof can be applied to general graphs (with the interpretation of $F_G(x)$ as the generation function of $\star$-forests). This approach gives better bounds than in~\cite{JPR24}, however, they are not nearly as good as the ones we obtained with the edge-based induction.

Secondly, the improvement from $4.25$ to $3.81$ may seem moderate. In our opinion, this indicates that other methods avoiding the use of the tree-generating function should be sought for the proof of zero-freeness. 
Indeed, we believe that our bounds are not optimal, cf.~\cite{dong2022foundations}*{Conjecture 11.98}.
We thus leave it as an exciting and challenging problem to further improve the bounds on the modulus of chromatic zeros in terms of the maximum degree.

\subsection*{Acknowledgements}
We thank the two anonymous referees for carefully reading an earlier version of the paper and for their constructive feedback.

\bibliographystyle{numeric}
\bibliography{chromatic}
\end{document}